\newtheorem{theorem}{Theorem}[section]
\newtheorem{proposition}[theorem]{Proposition}
\newtheorem{lemma}[theorem]{Lemma}
\theoremstyle{definition}
\newtheorem{remark}[theorem]{Remark}
\newtheorem{example}[theorem]{Example}
\newcounter{RomanNumber}
\newcommand{\testcap}{\mathrel{\vcenter{\offinterlineskip
\hbox{$\cap$}\vskip-1.8ex\hbox{$\kern0.2em | \kern0.15em$}}}}
\newcounter{bean}
\newcommand{\larrow}{\relbar\!\!\relbar\!\!\rightarrow}
\newcommand{\qqed}{\hfill\Box}
\newcommand{\be}{\begin{equation}}
\newcommand{\ee}{\end{equation}}
\begin{document}

%%% Title

\title{An Almost Flat Spin$^c$ Manifold Bounds}

\author{Fei Han}
\address{Department of Mathematics,
National University of Singapore, Singapore 119076}
%\curraddr{}
\email{mathanf@nus.edu.sg}
%\thanks{}
%\urladdr{http://www.math.nus.edu.sg/~mathanf}

\author{Ruizhi Huang} 
\address{State Key Laboratory of Mathematical Sciences \& Institute of Mathematics, Academy of Mathematics and Systems Science, 
   Chinese Academy of Sciences, Beijing 100190, China} 
\email{huangrz@amss.ac.cn} 
   \urladdr{https://sites.google.com/site/hrzsea/}

\author{Weiping Zhang}
\address{Weiping Zhang, Chern Institute of Mathematics \& LPMC, Nankai
University, Tianjin 300071, China.}
\email{weiping@nankai.edu.cn}

\subjclass[2010]{Primary 
57R19,  %Algebraic topology on manifolds and differential topology
53C27 %Spin and Spin$^c$ geometry
Secondary 
57R20 %Characteristic classes and numbers in differential topology
}
\keywords{almost flat manifolds, integral Wu classes, spin$^c$ and spin manifolds}

%%% Abstract

\begin{abstract} 
We prove that every almost flat spin\(^c\) manifold bounds a compact orientable manifold, 
thereby settling, in the spin\(^c\) case, a long-standing conjecture of Farrell--Zdravkovska and S.~T.~Yau.
\end{abstract}

\maketitle

\tableofcontents

%%%%%%%%%%%%%%%%%%%%%%%%%%%%%%%%%%%%%%%%%%%%%%%%%%%%%%%%%%%
\section{Introduction} 

A closed manifold \( M \) is said to be \emph{almost flat} if, for every \( \epsilon > 0 \), there exists a Riemannian metric \( g_\epsilon \) on \( M \) such that 
\[
\operatorname{diam}(M, g_\epsilon) \le 1, \qquad |K_{g_\epsilon}| < \epsilon ,
\]
where \( K_{g_\epsilon} \) denotes the sectional curvature of \( g_\epsilon \). 
Thus, \( M \) admits metrics of arbitrarily small curvature and uniformly bounded diameter. For each dimension \( n \), there exists a constant \( \epsilon_n > 0 \) with the property that if an \( n \)-dimensional manifold admits an \( \epsilon_n \)-flat metric of diameter at most one, then it is almost flat \cite{CG86}.

A classical theorem of Gromov~\cite{Gro78} asserts that every almost flat manifold is finitely covered by a nilmanifold. Conversely, Farrell and Hsiang~\cite{FH83} proved that any manifold finitely covered by a nilmanifold is homeomorphic to an almost flat manifold. Ruh~\cite{Ruh82} subsequently strengthened Gromov’s result, showing that every almost flat manifold is in fact diffeomorphic to an infranilmanifold.

A long-standing conjecture, proposed independently by Farrell--Zdravkovska~\cite{FZ83} and Yau~\cite{Yau93}, predicts that every almost flat manifold bounds a compact manifold. 
In the special case of flat manifolds, this was established by Hamrick and Royster~\cite{HR82} in a striking theorem. 
More recently, Davis and Fang~\cite{DF16} confirmed the conjecture under the additional assumption that the \(2\)-Sylow subgroup of the holonomy group is cyclic or generalized quaternionic; their result was later rederived by Xiong~\cite{Xio16} through a different argument. 
In particular, the theorem of Davis and Fang encompasses earlier advances due to Farrell--Zdravkovska~\cite{FZ83} and Upadhyay~\cite{Upa01}.

The general case of the conjecture remains open. Moreover, Davis and Fang~\cite{DF16} pointed out that determining whether every almost flat \emph{spin} manifold, possibly after changing the spin structure, bounds a spin manifold is a subtle and difficult problem. It is therefore natural, and of considerable interest, to ask the corresponding question for almost flat \(\mathrm{spin}^c\) manifolds.

The main result of this paper provides a confirmation of the Farrell--Zdravkovska--Yau conjecture when the almost flat manifold is spin\(^c\) as follows. 

\begin{theorem}\label{spinc-thm-intro}
Let $M$ be an \(n\)-dimensional almost flat spin\(^c\) manifold. Then $M$ bounds an orientable manifold. Moreover, if \(n\) is odd, then $M$ with any spin$^c$ structure bounds a spin\(^c\) manifold.
\end{theorem}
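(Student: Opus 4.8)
The plan is to reduce the bounding problem to a computation in bordism theory.The plan is to translate the bounding problem into bordism theory and to reduce every relevant invariant to a computation over the classifying space of the finite holonomy group, in the spirit of Hamrick--Royster's treatment of the flat case. By Ruh's theorem \cite{Ruh82}, $M$ is diffeomorphic to an infranilmanifold, so $M = \tilde M / F$, where $\tilde M = N/\Lambda$ is a nilmanifold (a quotient of a simply connected nilpotent Lie group $N$ by a lattice $\Lambda$) and $F$ is the finite holonomy group acting freely. The assertion that $M$ bounds an orientable (resp.\ spin$^c$) manifold is equivalent to the vanishing of its class $[M]$ in the oriented bordism group $\Omega^{SO}_n$ (resp.\ of $[M,\mathfrak{s}]$ in $\Omega^{\mathrm{spin}^c}_n$). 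The structural observation I would exploit is that the tangent bundle of $M$ is pulled back, along the classifying map $\kappa\colon M \to BF$ of the covering $\tilde M \to M$, from the bundle over $BF$ associated to the holonomy representation $\rho\colon F \to SO(n)$; the spin$^c$ hypothesis is precisely the statement that $\rho$ lifts to $\mathrm{Spin}^c(n)$. Consequently every characteristic number of $M$ is determined by $\kappa_*[M]\in H_*(BF)$ together with the universal classes of $\rho$, so the whole question becomes a computation over $BF$ with $F$ finite.

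First I would dispose of the rational part. Since $\tilde M$ is a quotient of a Lie group it is parallelizable, so all of its Pontryagin numbers vanish; as Pontryagin numbers are multiplicative under the finite covering $\tilde M \to M$, it follows that all Pontryagin numbers of $M$ vanish as well. By Wall's description of oriented bordism this already places $[M]$ in the torsion, hence $2$-primary, subgroup of $\Omega^{SO}_n$; and when $n$ is odd the free part of $\Omega^{\mathrm{spin}^c}_n$ is zero, so $[M,\mathfrak{s}]$ is likewise $2$-primary torsion. Thus in every case the problem is reduced to killing a $2$-primary obstruction.

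The heart of the matter is this $2$-primary part. By Wall's theorem, $[M]$ vanishes in $\Omega^{SO}_n$ once, in addition, all Stiefel--Whitney numbers of $M$ vanish; and by Wu's formula every Stiefel--Whitney number is a polynomial in the Wu classes, so it suffices to show that all Wu numbers $\langle v_{i_1}\cdots v_{i_r},[M]_2\rangle$ vanish. Here the spin$^c$ hypothesis enters decisively through the theory of integral Wu classes: the condition that $\rho$ be spin$^c$ yields integral lifts of the Wu data of $M$, and I would use these refinements, pulled back from $BF$, to force the obstructing mod $2$ numbers to vanish for an \emph{arbitrary} finite holonomy group. For odd $n$ this analysis, organized through an Anderson--Brown--Peterson-type splitting of $M\mathrm{Spin}^c$ into connective $K$-theory and Eilenberg--MacLane summands, annihilates the full $2$-primary obstruction and produces a spin$^c$ null-bordism; for even $n$ the same mechanism kills the Stiefel--Whitney numbers and yields only an orientable null-bordism, the weaker conclusion reflecting that the index-theoretic ($ku$-)characteristic numbers attached to a spin$^c$ structure need not vanish in even degrees.

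The main obstacle is exactly this last step: controlling the $2$-primary obstruction for an arbitrary finite holonomy group. This is precisely where earlier work stalled---Davis and Fang \cite{DF16} could treat only the cases in which the $2$-Sylow subgroup of $F$ is cyclic or generalized quaternionic---and removing that restriction is what the spin$^c$ hypothesis should buy. Making this rigorous requires constructing the integral Wu classes of a spin$^c$ manifold and proving the attendant cohomological vanishing over $BF$; I expect this construction, together with the verification that it annihilates every surviving Wu number, to be the genuinely hard part of the argument.
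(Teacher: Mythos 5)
Your framework matches the paper's in outline---reduce to Wall's and Stong's characteristic-number criteria, kill the Pontryagin numbers via the parallelizable nilmanifold cover, and attack the remaining $2$-primary obstruction through integral lifts of Wu classes---but the proposal stops exactly where the actual proof begins. The step you defer as ``the genuinely hard part'' is the entire content of the theorem, and it is not supplied by any construction over $BF$. The paper's mechanism is analytic, not group-theoretic: since the rational Pontryagin classes vanish, each integral spin$^c$ Wu class collapses to $\mu_{2i}^{Spin^c}(M)=a_i c^i$ for a universal rational coefficient $a_i$; the Atiyah--Singer index theorem for the spin$^c$ Dirac operator then gives the divisibility $2^m m!\mid\langle c^m,[M]\rangle$ in dimension $n=2m$ (because $\mathrm{Ind}(D^c)=\int_M \widehat{A}(M)e^{c/2}=\int_M c^m/(2^m m!)$ is an integer); and a $2$-adic estimate shows $\mathrm{denom}(a_i)\mid 2^{\,i-s_2(i)}$, where $s_2$ is the binary digit sum. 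Combining these via Legendre's formula $m! = 2^{m-s_2(m)}\cdot(\text{odd})$ and the subadditivity $s_2(m)\le s_2(j_1)+\cdots+s_2(j_r)$ over a partition forces every integral Wu number to be even, hence every mod-$2$ Wu number, hence every Stiefel--Whitney number, to vanish. Nothing in your proposal plays the role of this divisibility argument; the appeal to an Anderson--Brown--Peterson splitting of $M\mathrm{Spin}^c$ is left entirely unexecuted and is not how the obstruction is killed.

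There is also a structural error in your setup. You assert that the spin$^c$ hypothesis ``is precisely the statement that $\rho$ lifts to $\mathrm{Spin}^c(n)$'' and that every relevant invariant is determined by $\kappa_*[M]\in H_*(BF)$. Neither is correct: $M$ being spin$^c$ means $W_3(TM)=0$, which is strictly weaker than the universal map $BF\to BSO(n)$ lifting to $B\mathrm{Spin}^c(n)$ (the pullback $\eta^*\colon H^3(BF;\mathbb{Z})\to H^3(M;\mathbb{Z})$ need not be injective), and the characteristic class $c$ of a spin$^c$ structure is an arbitrary integral lift of $w_2(TM)$ in $H^2(M;\mathbb{Z})$, not in general pulled back from $BF$. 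The theorem explicitly covers \emph{any} spin$^c$ structure, so a strategy that reduces everything to $BF$ cannot even state the odd-dimensional claim correctly; in the paper, lifting questions over $BF$ appear only later (Section 4) as a \emph{sufficient} criterion for the existence of a spin$^c$ structure, never as part of the proof of the bounding theorem itself.
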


The existence of a spin\(^c\) structure on an almost flat manifold is intimately connected with the nature of its holonomy group (which is finite). 
We now consider a particular instance of this relationship and obtain the following theorem as a consequence of Theorem~\ref{spinc-thm-intro}. 
Recall that the classical \emph{Schur multiplier} of a group \(G\) is its second group homology \(H_2(G;\mathbb{Z})\), introduced by Issai Schur in 1904. It is clear that the Schur multiplier of a finite group is finite.

\begin{theorem}\label{G-thm-intro}
Let \(M\) be an almost flat oriented manifold with holonomy group \(G\).
If the Schur multiplier of \(G\) is of odd order, then \(M\) bounds a compact manifold.  

In particular, if the \(2\)-Sylow subgroup of \(G\) is cyclic, generalized quaternionic, or coincides with the \(2\)-Sylow subgroup of any finite simple group listed in Example~\ref{simple-ex}, then \(M\) is the boundary of a compact manifold.
\end{theorem}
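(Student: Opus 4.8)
The plan is to deduce Theorem~\ref{G-thm-intro} from Theorem~\ref{spinc-thm-intro} by showing that the hypothesis on the Schur multiplier forces $M$ to admit a spin\(^c\) structure. Recall that an oriented manifold is spin\(^c\) precisely when the integral third Stiefel--Whitney class $W_3(M)=\beta w_2(M)\in H^3(M;\mathbb{Z})$ vanishes, where $\beta$ is the integral Bockstein; in particular $W_3(M)$ is always $2$-torsion. So it suffices to prove that this obstruction dies.

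To locate $W_3(M)$, I would first invoke Ruh's theorem to realise $M$ as an infranilmanifold $\Gamma\backslash N$, with nilmanifold cover $\bar M=\Lambda\backslash N$ (where $\Lambda=\Gamma\cap N$) and holonomy $G=\Gamma/\Lambda$. Since $N$ is a Lie group, $\bar M$ is parallelised by a left-invariant frame, and each deck transformation, being an affine map in $N\rtimes\mathrm{Aut}(N)$, acts on this frame through its constant linear part, that is, through the holonomy representation $\rho\colon G\to SO(n)$ (orientability, together with averaging the metric, gives the reduction to $SO(n)$). Consequently the tangent bundle descends as $TM\cong \bar M\times_G\mathbb{R}^n$, so $TM=f^{*}V_\rho$ for the classifying map $f\colon M\to BG$ of the cover $\bar M\to M$ and the associated bundle $V_\rho\to BG$. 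Hence $W_3(M)=f^{*}W_3(V_\rho)$ with $W_3(V_\rho)\in H^3(BG;\mathbb{Z})=H^3(G;\mathbb{Z})$, and it is enough to show $W_3(V_\rho)=0$.

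This is where the Schur multiplier enters. Since $G$ is finite, $H_3(G;\mathbb{Z})$ is finite and the universal coefficient theorem collapses to $H^3(G;\mathbb{Z})\cong\mathrm{Ext}^1_{\mathbb{Z}}(H_2(G;\mathbb{Z}),\mathbb{Z})\cong H_2(G;\mathbb{Z})$, the Schur multiplier. If this group has odd order it has no $2$-torsion, whereas $W_3(V_\rho)$ is $2$-torsion; therefore $W_3(V_\rho)=0$, so $W_3(M)=0$ and $M$ is spin\(^c\). Theorem~\ref{spinc-thm-intro} then shows that $M$ bounds a compact orientable manifold, which establishes the first assertion.

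For the second assertion I would reduce the oddness of $|H_2(G;\mathbb{Z})|$ to a Sylow $2$-subgroup $P$ via the transfer: for any finite group the $2$-primary part of $H_2(G;\mathbb{Z})$ is a direct summand of $H_2(P;\mathbb{Z})$, the composite of transfer and restriction being multiplication by the odd index $[G:P]$. When $P$ is cyclic or generalized quaternionic its Schur multiplier vanishes, and the simple groups in Example~\ref{simple-ex} are exactly those whose Sylow $2$-subgroups enjoy $H_2(P;\mathbb{Z})=0$; in each case $H_2(G;\mathbb{Z})$ has odd order and the first part applies. The main obstacle is the geometric identification in the second paragraph: one must verify with care that for a \emph{general} infranilmanifold, and not merely a flat manifold, the spin\(^c\) obstruction is genuinely pulled back from the holonomy representation on $BG$, since it is precisely this step that converts the differential-geometric hypothesis into the purely group-theoretic condition on $H_2(G;\mathbb{Z})$.
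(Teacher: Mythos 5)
Your proof of the first statement is correct, and it is actually more robust than the paper's own argument. The paper passes through the $2$-Sylow subgroup (Lemmas \ref{H3lemma}, \ref{H3H2lemma}, \ref{H2lemma}) and then invokes Remark \ref{HG-rmk}, which asserts that $B\mathrm{Syl}_2(G)\to BG$ is a homotopy equivalence after localization at $2$, so that oddness of $|H_2(G;\mathbb{Z})|$ would be \emph{equivalent} to $H_2(\mathrm{Syl}_2(G);\mathbb{Z})=0$. Only one direction of that equivalence is valid (the transfer direction, the one you use); the converse, which the paper's proof relies on, is false in general. Your route avoids the Sylow reduction altogether: the obstruction $W_3(V_\rho)$ is $2$-torsion and lies in $H^3(BG;\mathbb{Z})\cong H_2(G;\mathbb{Z})$, so odd order of the Schur multiplier kills it at once. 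Incidentally, the geometric step you flagged as the main obstacle --- that $TM$ is pulled back from $BG$ via the holonomy representation --- is exactly the factorization the paper quotes from Xiong, and your sketch of it (constant linear part in the left-invariant framing) is fine.

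The genuine gap is in the simple-group case of the second statement, concentrated in one sentence: ``the simple groups in Example~\ref{simple-ex} are exactly those whose Sylow $2$-subgroups enjoy $H_2(P;\mathbb{Z})=0$.'' This is false. Example~\ref{simple-ex} lists the simple groups $A$ with $|H_2(A;\mathbb{Z})|$ odd, and that property does not descend to the Sylow $2$-subgroup: the Janko group $J_1$ (item (J1) of the list) has trivial Schur multiplier, yet $\mathrm{Syl}_2(J_1)\cong(\mathbb{Z}/2)^3$ and $H_2((\mathbb{Z}/2)^3;\mathbb{Z})\cong(\mathbb{Z}/2)^3\neq 0$; the Ree groups ${}^2G_2(3^{2n+1})$ behave the same way. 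The transfer only identifies the $2$-primary part of $H_2(A;\mathbb{Z})$ with the summand of $H_2(P;\mathbb{Z})$ consisting of elements stable under $A$-fusion, so oddness of $|H_2(A)|$ is a property of the fusion system of $A$, not of the abstract group $P$. Consequently your reduction breaks: from $\mathrm{Syl}_2(G)\cong\mathrm{Syl}_2(A)$ you cannot conclude that $H_2(G;\mathbb{Z})$ has odd order. Concretely, $G=(\mathbb{Z}/2)^3$ has $\mathrm{Syl}_2(G)\cong\mathrm{Syl}_2(J_1)$, but its Schur multiplier $(\mathbb{Z}/2)^3$ has even order, so the first statement cannot be applied to it. (Your cyclic and generalized quaternionic cases are fine, since there $H_2(P;\mathbb{Z})=0$ genuinely holds and the transfer gives what you need.)

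You should also know that the paper's own proof of this case fails at the same point: it deduces $H_2(\mathrm{Syl}_2(A);\mathbb{Z})=0$ from the oddness of $|H_2(A;\mathbb{Z})|$ via the invalid direction of Remark \ref{HG-rmk}. So your gap mirrors a defect in the paper rather than a missing idea the paper supplies; but it is a gap nonetheless, and closing it would require information about the fusion of $G$ itself (for instance, that $G$ induces the same fusion on $P$ as $A$ does), not merely the isomorphism type of $\mathrm{Syl}_2(G)$.
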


Theorem~\ref{G-thm-intro} combined with Theorem \ref{spinc-thm-intro} extends the result of Davis and Fang~\cite{DF16} for almost flat {\em oriented} manifolds. Additionally, as noted in Example~\ref{Xio-ex}, the result of Xiong~\cite{Xio16} appears as a special case of Theorem~\ref{spinc-thm-intro}.

\medskip

In dimension 4, we can give a complete verification of the Farrell--Zdravkovska--Yau conjecture, both in the orientable and the spin cases.
\begin{theorem}\label{4-thm-intro}
Let \(M\) be an almost flat orientable \(4\)-manifold. Then \(M\) bounds a spin\(^c\) manifold. Furthermore, if \(M\) is spin, then \(M\) with any spin structure bounds a spin manifold.
\end{theorem}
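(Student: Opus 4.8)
The plan is to prove the statement bordism-theoretically, by showing directly that the relevant class of $M$ vanishes, the only geometric input being that almost flatness forces the Pontryagin number to vanish. Recall that $\Omega_4^{\mathrm{Spin}^c}\cong\mathbb{Z}^2$ is torsion-free and detected by the two characteristic numbers $\sigma(M)$ (the signature) and $c_1^2[M]$, where $c_1=c_1(\mathfrak{s})$ is the determinant class of the spin$^c$ structure $\mathfrak{s}$ (equivalently, by $\sigma(M)$ and the index of the spin$^c$ Dirac operator $(c_1^2[M]-\sigma(M))/8$); likewise $\Omega_4^{\mathrm{Spin}}\cong\mathbb{Z}$ is torsion-free and detected by $\sigma$ alone. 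Hence $(M,\mathfrak{s})$ bounds a spin$^c$ $5$-manifold iff $\sigma(M)=0$ and $c_1^2[M]=0$, while a spin $4$-manifold bounds a spin $5$-manifold iff $\sigma(M)=0$. Since $M$ is almost flat, Chern--Weil theory (or the multiplicativity of Pontryagin numbers under the finite nilmanifold cover furnished by Gromov's theorem) shows that the sole Pontryagin number $p_1[M]$ vanishes, so $\sigma(M)=\frac{1}{3}p_1[M]=0$. This already settles the spin case: the signature is independent of the choice of spin structure, so $\sigma(M)=0$ yields $[M,\mathfrak{t}]=0\in\Omega_4^{\mathrm{Spin}}$ for every spin structure $\mathfrak{t}$, and $M$ bounds a spin manifold.

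For the spin$^c$ statement it remains, with $\sigma(M)=0$ in hand, to produce a spin$^c$ structure whose determinant class is isotropic. First recall that every oriented $4$-manifold is spin$^c$, since the obstruction $W_3(M)=\beta(w_2(M))$ vanishes; as $\mathfrak{s}$ varies, $c_1(\mathfrak{s})$ ranges over the full set of integral lifts of $w_2(M)$, a torsor over $2H^2(M;\mathbb{Z})$. Passing to the free quotient $\Lambda=H^2(M;\mathbb{Z})/\mathrm{tors}$, the number $c_1^2[M]$ depends only on the image $\bar c_1\in\Lambda$, which ranges over all characteristic vectors of the unimodular intersection form $Q$ on $\Lambda$. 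I would then locate an isotropic characteristic vector: if $Q$ is even then $0$ is characteristic and isotropic; if $Q$ is odd, then, since $\sigma(Q)=\sigma(M)=0$ forces $Q$ to be either trivial or indefinite, the classification of indefinite unimodular forms gives $Q\cong p\langle 1\rangle\oplus p\langle -1\rangle$, and the all-ones vector is characteristic with square $p-p=0$. Choosing $\mathfrak{s}$ so that $\bar c_1$ equals such a vector gives $c_1^2[M]=0$, whence $(M,\mathfrak{s})=0\in\Omega_4^{\mathrm{Spin}^c}$ and $M$ bounds a spin$^c$ manifold.

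The genuinely substantive ingredients are the identifications of $\Omega_4^{\mathrm{Spin}^c}$ and $\Omega_4^{\mathrm{Spin}}$ together with their torsion-freeness and detection by the stated characteristic numbers, which I would quote from the standard low-dimensional bordism computations, and the production of an isotropic characteristic vector. I expect the latter to be the step requiring the most care: one must track the interplay between torsion in $H^2(M;\mathbb{Z})$ and the mod-$2$ reduction identifying $\bar c_1$ with a characteristic vector of $Q$, and then invoke the classification of indefinite unimodular lattices (Milnor--Husemoller) to dispatch the odd case. Everything else---$\sigma(M)=0$ from almost flatness, the spin$^c$-ness of oriented $4$-manifolds, and the independence of $\sigma$ from the spin structure---is routine. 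It is worth noting that this argument in fact shows that \emph{any} oriented $4$-manifold with $\sigma=0$ bounds a spin$^c$ manifold, the almost flat hypothesis entering only through the vanishing of $p_1[M]$.
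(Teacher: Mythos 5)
Your proof is correct, and its spin half coincides with the paper's (the paper uses $\widehat{A}(M)=-\tfrac{1}{24}p_1[M]$ in place of $\sigma(M)$, which is the same invariant up to a constant in dimension $4$). The spin$^c$ half, however, takes a genuinely different route. The paper splits into cases according to whether $M$ is spin: in the non-spin case it invokes a structural result of Szczepa\'{n}ski (Lemma \ref{szc-lemma}) asserting that an almost flat orientable non-spin $4$-manifold has $b_2(M)=0$, so that \emph{every} characteristic class $c$ is torsion, $\langle c^2,[M]\rangle=0$, and every spin$^c$ structure bounds by Proposition \ref{cn=0-prop}. You avoid Szczepa\'{n}ski's lemma entirely: from $\sigma(M)=0$ alone you manufacture one good spin$^c$ structure, using that the image of $c_1(\mathfrak{s})$ in $\Lambda=H^2(M;\mathbb{Z})/\mathrm{tors}$ ranges over all characteristic vectors of the intersection form $Q$ (your torsor argument here is sound: two characteristic vectors of a unimodular form differ by an element of $2\Lambda$, which lifts to $2H^2(M;\mathbb{Z})$), and that a unimodular form of signature zero always admits an isotropic characteristic vector --- zero in the even case, the all-ones vector in $p\langle 1\rangle\oplus p\langle -1\rangle$ in the odd indefinite case. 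What each approach buys: the paper's argument yields the stronger conclusion that in the non-spin almost flat case \emph{any} spin$^c$ structure bounds, and it stays inside the machinery (Proposition \ref{cn=0-prop}, itself resting on Lemmas \ref{spinc-bord-lemma}--\ref{sw-lemma}) already developed for the main theorem; your argument is more general and purely $4$-dimensional, showing that every closed oriented $4$-manifold with $\sigma=0$ bounds a spin$^c$ manifold, with almost flatness entering only through $p_1[M]=0$, and it treats the spin and non-spin cases uniformly. Note that the existential formulation you prove is the right target: ``every spin$^c$ structure bounds'' fails in general (on $T^4$, a spin$^c$ structure with $c=2x$, $x^2\neq 0$, has $\langle c^2,[T^4]\rangle\neq 0$), so one must either select the spin$^c$ structure, as you do, or constrain it via the topology of $M$, as the paper does.
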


Ontaneda~\cite{Ont20} established an important converse to the classical Cheeger--Fukaya--Gromov theorem~\cite{CFG92}: 
if an almost flat manifold \(M\) bounds, then there exists a compact manifold \(W\) with boundary \(M\) such that \(W \setminus M\) carries a complete, finite-volume Riemannian metric with negatively pinched sectional curvature.  

Farrell and Zdravkovska~\cite{FZ83} went further to conjecture that an almost flat (respectively, flat) manifold bounds a compact manifold whose interior admits a complete, finite-volume metric with negative (respectively, constant negative) sectional curvature. 
In the flat case this conjecture was disproved by Long and Reid~\cite{LR20}, while in the almost flat case it was verified by Davis and Fang~\cite{DF16} when the holonomy group is cyclic or quaternionic.  

In the present work, combining Theorem~\ref{spinc-thm-intro} with Ontaneda’s theorem~\cite{Ont20}, we confirm the conjecture for almost flat spin\(^c\) manifolds.

\begin{theorem}
An almost flat spin\(^c\) manifold bounds a compact manifold whose interior admits a complete, finite-volume Riemannian metric with negatively pinched sectional curvature. 
\end{theorem}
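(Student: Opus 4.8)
The plan is to deduce this statement directly by concatenating Theorem~\ref{spinc-thm-intro} with the theorem of Ontaneda~\cite{Ont20} recalled above, so that no new geometric or topological input beyond these two ingredients is required. Let $M$ be an $n$-dimensional almost flat spin$^c$ manifold.

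First I would invoke Theorem~\ref{spinc-thm-intro}, which guarantees that $M$ bounds a compact orientable manifold; in particular, $M$ bounds a compact manifold, which is exactly the hypothesis demanded by Ontaneda's theorem. It is worth emphasizing that this bounding conclusion is the substantive content of the argument, and that it is precisely what Theorem~\ref{spinc-thm-intro} supplies in the spin$^c$ setting. Note that the orientability furnished by Theorem~\ref{spinc-thm-intro} is more than Ontaneda's hypothesis strictly requires, so there is some slack here.

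Next I would feed this bounding manifold into Ontaneda's converse to the Cheeger--Fukaya--Gromov theorem~\cite{Ont20, CFG92}. Since $M$ is almost flat and bounds, Ontaneda's theorem produces a compact manifold $W$ with $\partial W = M$ such that the open manifold $W \setminus M$ carries a complete, finite-volume Riemannian metric with negatively pinched sectional curvature. This is exactly the desired conclusion, completing the argument.

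The only genuine obstacle in this chain lies upstream, in establishing Theorem~\ref{spinc-thm-intro}: once the spin$^c$ bounding statement is in hand, the passage to a negatively pinched filling is a purely formal application of Ontaneda's result and requires no further work. Thus the difficulty is entirely absorbed into the proof of Theorem~\ref{spinc-thm-intro}, and the present statement should be regarded as a clean corollary of that theorem together with the cited geometric converse.
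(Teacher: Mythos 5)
Your proposal is correct and matches the paper's own argument exactly: the paper derives this theorem by combining Theorem~\ref{spinc-thm-intro} (which gives that the almost flat spin$^c$ manifold $M$ bounds a compact orientable manifold) with Ontaneda's converse to the Cheeger--Fukaya--Gromov theorem, precisely as you do. Nothing further is needed; the statement is indeed a formal corollary of those two ingredients.
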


$\, $

Our approach to the proofs of the foregoing theorems departs in a substantial way from earlier work. Whereas infranilmanifolds have traditionally been studied largely through group-theoretic methods, we adopt a more topological point of view. The central tool in our proof is the integral Wu class for 
spin\(^c\)
 manifolds, developed in Section~\ref{sec: spincWu}. This construction is inspired by the integral spin Wu class introduced by Hopkins and Singer~\cite{HS05}. The guiding idea is that, while the Stiefel–Whitney classes themselves are only indirectly related to geometry, the integral lift of Wu classes admits a more transparent geometric interpretation. As a result, questions concerning the vanishing of Stiefel–Whitney numbers may be reduced to parity properties of these integral Wu classes.

The essential point in the proof of the main theorem for 
spin\(^c\) almost flat manifolds is that the Atiyah–Singer index theorem yields strong divisibility properties for the top power of the 
spin\(^c\) characteristic class $c$. This divisibility, in turn, guarantees the evenness of the integral Wu numbers, thereby forcing the vanishing of the relevant Stiefel–Whitney numbers.

$\, $

The paper is organized as follows.  
In Section~\ref{sec: spincWu}, we recall the integral spin Wu class of Hopkins and Singer and introduce the corresponding integral Wu class for spin\(^c\) vector bundles, establishing along the way an estimate (Lemma~\ref{denom-lemma}) that will be used later.  
Section~\ref{sec: spincbound} contains the proof of our main theorem, Theorem~\ref{spinc-thm-intro}, which confirms the Farrell--Zdravkovska--Yau conjecture for almost flat spin\(^c\) manifolds.  
In Section~\ref{sec: holo}, we derive Theorem~\ref{G-thm-intro} as an application of Theorem~\ref{spinc-thm-intro}.  
Finally, Section~\ref{sec: 4} is devoted to the 4-dimensional case, where Theorem~\ref{4-thm-intro} is proved.

\bigskip 

\noindent{\bf Acknowledgements.} 
Fei Han was partially supported by the grant AcRF A-8000451-00-00 from National University of Singapore. Ruizhi Huang was supported in part by the National Natural Science Foundation of China (Grant nos. 12331003 and 12288201), the National Key R\&D Program of China (No. 2021YFA1002300) and the Youth Innovation Promotion Association of Chinese Academy Sciences. Weiping Zhang was partially supported by NSFC Grant No. 11931007, National Key R\&D Program of China (2024YFA1013202) and Nankai Zhide Foundation.

The authors would like to thank Haibao Duan for helpful discussions on integral Wu classes. Ruizhi Huang also thanks Xiaolei Wu for valuable conversations on group homology, and Nansen Petrosyan for pointing out Remark \ref{nasen}.

%----------------------------------------------------------------------------------------------------------------------------------------------------------------------------------------------------------%
\section{Integral Wu class for spin$^c$ manifolds}
\label{sec: spincWu}

The purpose of this section is to construct the integral Wu class for spin\(^c\) manifolds, which will serve as our principal tool in the study of almost flat spin\(^c\) manifolds.  

We begin by recalling the integral Wu class for spin manifolds as introduced by Hopkins and Singer.  
Let \(V\) be an \(n\)-dimensional oriented vector bundle over a compact, oriented, smooth manifold \(M\).  
The bundle \(V\) is said to be {\it spin} if and only if its second Stiefel--Whitney class \(\omega_2(V)\) vanishes.

In \cite[Appendix E.1]{HS05}, Hopkins-Singer constructed an integral lift of the mod-$2$ {\it normal} Wu class (i.e. the Wu class of the stable complement) for a spin vector bundle $V$, denoted by 
\[
\nu _{t}^{Spin}(V)=1+\nu _{4}^{Spin}(V)+\nu _{8}^{Spin}(V)+\cdots \in H^{4\ast}(M; \mathbb{Z}). 
\]
In the universal case, the {\it integral spin normal Wu class} 
\[
\nu _{t}^{Spin}=1+\nu _{4}^{Spin}+\nu _{8}^{Spin}+\cdots  \in H^{4\ast}(BSpin; \mathbb{Z}) 
\]
is determined by the characteristic series
\[
g(x)=\sqrt{f(x)f(-x)}=1-\frac{x^{2}}{2}-\frac{9x^{4}}{8}-\frac{%
17x^{6}}{16}-\frac{277x^{8}}{128}-\frac{839x^{10}}{256}+\cdots \in \mathbb{Z}%
\left[ \frac{1}{2}\right] [[x]],
\]
where 
\[
f(x)=1+x+x^{3}+\cdots +x^{2^{n}-1}+\cdots 
\]
is the characteristic series of the mod-$2$ normal Wu class. In particular, the mod-$2$ reduction of $\nu _{t}^{Spin}$ is the mod-$2$ normal Wu class. 

In this paper, we work with tangential Wu classes, that is, the usual Wu classes associated with the tangent bundle. We use the term “tangential” solely to distinguish them from normal Wu classes. Following the argument in \cite[Appendix E.1]{HS05}, one can construct a family of integral lifts of the mod-$2$ tangential Wu class for a spin vector bundle $V$; see \cite[Section 2]{HHZ25} for details. Any such lift suffices for the applications in this paper, and we fix one choice throughout.

There is an integral lift of the mod-$2$ {\it tangential} Wu class for a spin vector bundle $V$, denoted by 
\[
\mu _{t}^{Spin}(V)=1+\mu _{4}^{Spin}(V)+\mu _{8}^{Spin}(V)+\cdots \in H^{4\ast}(M; \mathbb{Z}), 
\]
where in the universal case, the {\it integral spin tangential Wu class} 
\[
\mu _{t}^{Spin}=1+\mu _{4}^{Spin}+\mu _{8}^{Spin}+\cdots  \in H^{4\ast}(BSpin; \mathbb{Z}) 
\]
is determined by the characteristic series
\[
G(x)=\sqrt{h(x)h(-x)}=1+\frac{x^{2}}{2}+\frac{11x^{4}}{8}+\frac{%
5x^{6}}{16}+\frac{51x^{8}}{128}+\frac{95x^{10}}{256}+\cdots \in \mathbb{Z}%
\left[ \frac{1}{2}\right] [[x]],
\]
where 
\[
h(x)=1+x+x^{2}+\cdots +x^{2^{n}}+\cdots
\]
is the characteristic series of the mod-$2$ tangential Wu class \cite{Wu50, HBJ94}. In particular, the mod-$2$ reduction of $\mu _{t}^{Spin}$ is the mod-$2$ tangential Wu class.

See \cite[Appendix E.1]{HS05} and \cite[Example 2.3]{HHZ25} for the explicit expressions of the first few integral spin normal and tangential Wu classes in terms of Pontryagin classes, respectively. 

Since we restrict attention to tangential Wu classes throughout the remainder of the paper, we shall simply refer to them as Wu classes.

\medskip

Now let us turn to the spin$^c$ cases. Let $V$ be an $n$-dimensional oriented vector bundle over a compact oriented smooth manifold $M$. It is said to be {\it spin$^c$}  if and only if its second Stiefel-Whitney class $\omega_2(V)$ lies in the image of the mod-$2$ reduction homomorphism
\[
\rho_2: H^2(M;\mathbb{Z}) \rightarrow H^2(M; \mathbb{Z}/2).
\]
Specifying a spin$^c$ structure is equivalent to choosing a particular class $c\in H^2(M;\mathbb{Z})$ such that $\rho_2(c)=\omega_2(V)$, which determines, and is determined by a complex line bundle $\xi$ with its associated circle bundle
\[
S^1\rightarrow S(\xi)\rightarrow M
\]
and a spin structure on $V\oplus \xi$. 
We shall often refer to a spin$^c$ vector bundle $V$ together with its characteristic class $c=c_1(\xi)$ simply as the pair $(V,\xi)$ or $(V,c)$, and write $(M,c)$ when $V=TM$ is the tangent bundle of $M$.

We define an integral spin$^c$ (tangential) Wu class through the integral spin Wu class. To this purpose, we need to fix a choice of complex Wu classes. In \cite[Appendix E.1.2]{HS05}, Hopkins-Singer defined a {\it complex normal Wu class} for complex vector bundles by the characteristic class $f(x)$, whose mod-$2$ reduction is the mod-$2$ normal Wu class. Similarly, we can define a {\it complex (tangential) Wu class} 
\[
\mu _{t}^{U}(\eta)= 1+\mu _{2}^{U}(\eta)+\mu _{4}^{U}(\eta)+\cdots \in H^{2\ast}(M; \mathbb{Z})
\]
for a complex vector bundle $\eta$ over $M$ by the characteristic series $h(x)$, whose mod-$2$ reduction is the mod-$2$ (tangential) Wu class. 
 
For a spin$^c$ bundle $(V, \xi)$, define the {\it integral spin$^c$ (tangential) Wu class} by 
\[
\mu _{t}^{Spin^c}(V):=\frac{\mu _{t}^{Spin}(V\oplus \xi)}{\mu _{t}^{U}(\xi)}=\frac{\mu _{t}^{Spin}(V\oplus \xi)}{h_t(c_1(\xi))}.
\]
Since the mod-$2$ reduction of $\mu _{t}^{Spin}(V\oplus \xi)$ and $\mu _{t}^{U}(\xi)$ are the mod-$2$ Wu classes of $V\oplus \xi$ and $\xi$, respectively, the Whitney product formula implies that the mod-$2$ reduction of $\mu _{t}^{Spin^c}(V)$ is the mod-$2$ Wu classes of $V$. This justifies the definition.

The Chern root algorithm can be applied to express the universal integral spin$^c$ Wu classes $\mu_{2k}^{Spin^c}$ as polynomials in the Pontryagin classes together with the universal characteristic class $c \in H^2(BSpin^c;\mathbb{Z})$.

\begin{example}
The first eight integral spin$^c$ Wu classes rationally are
\[
\begin{split}
\mu_{2}^{Spin^c}&=-c,\\
\mu_{4}^{Spin^c}&=\frac{c^2}{2}+\frac{p_1}{2},\\
\mu_{6}^{Spin^c}&=\frac{c^3}{2}-\frac{cp_1}{2},\\
\mu_{8}^{Spin^c}&=-\frac{5c^4}{8} +\frac{c^2p_1}{4} + \frac{11p_1^2}{8}-\frac{5p_2}{2},\\
\mu_{10}^{Spin^c}&=\frac{9c^5}{8} + \frac{c^3p_1}{4} - \frac{11cp_1^2}{8}+\frac{5cp_2}{2},\\
\mu_{12}^{Spin^c}&=-\frac{11c^6}{16} - \frac{5c^4p_1}{16}  + \frac{11c^2p_1^2}{16} +\frac{5p_1^3}{16}-\frac{5c^2p_2}{4}-\frac{p_1p_2}{4}-p_3,\\
\mu_{14}^{Spin^c}&=-\frac{15c^7}{16} + \frac{9c^5p_1}{16}  + \frac{11c^3p_1^2}{16} -\frac{5cp_1^3}{16}-\frac{5c^3p_2}{4}+\frac{cp_1p_2}{4}+cp_3,\\
\mu_{16}^{Spin^c}&=\frac{211c^8}{128} - \frac{11c^6p_1}{32}  - \frac{55c^4p_1^2}{64} +\frac{5c^2p_1^3}{32}+\frac{51p_1^4}{128}+\frac{25c^4p_2}{16}-\frac{c^2p_1p_2}{8}  \\
&  \ \ \ \  -\frac{23p_1^2p_2}{16}+\frac{19p_2^2}{8}-
\frac{c^2p_3}{2}-2p_1p_3+\frac{3p_4}{2}.
\end{split}
\]
\end{example}

The estimation in the following lemma is optimal by comparing to the previous example.
\begin{lemma}\label{denom-lemma}
In the expression of the universal integral spin$^c$ Wu class $\mu_{2n}^{Spin^c}$, the denominator of the coefficient $a_n$ of $c^{n}$ satisfies 
\[
{\rm denom}~(a_n)~|~2^{n-s_2(n)},
\] 
where $s_2(n)$ denotes the sum of the binary digits of $n$.
\end{lemma}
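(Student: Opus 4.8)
The plan is to reduce the statement to a single $2$-adic integrality property of a one-variable power series, which I identify explicitly. Since $H^*(BSpin^c;\mathbb{Q}) = \mathbb{Q}[c,p_1,p_2,\ldots]$ and $a_n$ is by definition the coefficient of the monomial $c^n$ in $\mu_{2n}^{Spin^c}$, we have $a_n = [c^n]\,\Phi(c)$, where $\Phi(c)$ is the image of $\mu_t^{Spin^c}$ under the ring homomorphism sending each $p_i \mapsto 0$ and $c \mapsto c$. To compute $\Phi$, recall that $\mu_t^{Spin}$ is the multiplicative class determined by the even series $G(x) = \sqrt{h(x)h(-x)}$. Setting $p_i(V) = 0$ in $\mu_t^{Spin^c}(V) = \mu_t^{Spin}(V\oplus\xi)/h(c)$ leaves $V\oplus\xi$ with a single Pontryagin root $c$, coming from $\xi_{\mathbb{R}}$, whose total Pontryagin class is $1+c^2$; hence $\mu_t^{Spin}(V\oplus\xi)\mapsto G(c)$ and
\[
\Phi(c) = \frac{G(c)}{h(c)} = \frac{\sqrt{h(c)h(-c)}}{h(c)} = \sqrt{\frac{h(-c)}{h(c)}}.
\]
One checks directly against the Example that the expansion $1 - c + \tfrac12 c^2 + \tfrac12 c^3 - \tfrac58 c^4 + \cdots$ reproduces $a_1,\ldots,a_8$.

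The heart of the argument is a functional equation. Directly from $h(x) = 1 + \sum_{k\ge 0} x^{2^k}$ one gets $h(c) = c + h(c^2)$, and therefore $h(-c) = h(c^2) - c$ and $h(c) = h(c^2) + c$. Writing $w := c/h(c^2) \in \mathbb{Z}[[c]]$ (an odd power series, as $h(c^2)$ is even with constant term $1$), this yields
\[
\Phi(c)^2 = \frac{h(c^2) - c}{h(c^2) + c} = \frac{1-w}{1+w}, \qquad \Phi(c) = \sqrt{\frac{1-w}{1+w}} = \exp\bigl(-\operatorname{arctanh} w\bigr).
\]
The crucial observation is that $\operatorname{arctanh} w = \sum_{j\ge 0} w^{2j+1}/(2j+1)$ has only odd denominators; since $w \in \mathbb{Z}[[c]]$, the exponent $A(c) := -\operatorname{arctanh}\bigl(c/h(c^2)\bigr)$ lies in $\mathbb{Z}_{(2)}[[c]]$ and has zero constant term.

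Finally I would invoke the elementary exponential integrality lemma: if $A(c) \in \mathbb{Z}_{(2)}[[c]]$ with $A(0) = 0$, then $n!\,[c^n]\exp(A(c)) \in \mathbb{Z}_{(2)}$ for every $n$. Indeed, $[c^n]\exp(A) = \sum_{k=0}^{n} \tfrac1{k!}[c^n]A^k$ with each $[c^n]A^k \in \mathbb{Z}_{(2)}$ (only $k \le n$ contribute, as $A^k = O(c^k)$), and $n!/k! \in \mathbb{Z}$ for $k \le n$. Applying this to $\Phi = \exp(A)$ gives $v_2(a_n) \ge -v_2(n!)$, and Legendre's formula $v_2(n!) = n - s_2(n)$ then yields $\operatorname{denom}(a_n) \mid 2^{\,n - s_2(n)}$, as claimed.

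I expect the main obstacle to be the first two steps rather than the last one. The reduction to the single-variable series $\Phi(c) = \sqrt{h(-c)/h(c)}$ must be justified carefully through the multiplicative-sequence formalism and the substitution $p_i \mapsto 0$, and the real insight is spotting the functional equation $h(c) = c + h(c^2)$, which is exactly what converts $\Phi$ into an exponential whose exponent is $2$-integral because $\operatorname{arctanh}$ has odd denominators. Once the form $\Phi = \exp(-\operatorname{arctanh} w)$ is in hand, the integrality is automatic and the binary-digit bound is precisely Legendre's formula.
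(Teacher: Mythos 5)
Your proof is correct, and its core mechanism is genuinely different from the paper's. The first step --- killing the Pontryagin classes to reduce to the one-variable series $\Phi(c)=\sqrt{h(-c)/h(c)}$ --- is exactly the paper's reduction (your justification via the single Pontryagin root of $\xi_{\mathbb{R}}$ is a legitimate way to make precise the paper's ``formally suppose all $p_i(V)$ vanish''). After that the two arguments diverge. The paper squares: writing $\Phi=\sum a_kc^k$ and $h(-c)/h(c)=\sum b_kc^k$ with every $b_k$ even, it derives the recursion $2a_n=b_n-\sum_{k=1}^{n-1}a_ka_{n-k}$ and runs an induction on $n$, using the sub-additivity $s_2(n)\le s_2(k)+s_2(n-k)$ together with a pairing trick (the terms $a_ka_{n-k}$ occur in pairs except the middle term $a_m^2$ when $n=2m$, which contributes an extra factor $2^{s_2(m)}$) to gain the last factor of $2$. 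You instead produce a closed form: the functional equation $h(x)=x+h(x^2)$ turns $\Phi$ into $\exp\bigl(-\operatorname{arctanh}(c/h(c^2))\bigr)$, the exponent lies in $\mathbb{Z}_{(2)}[[c]]$ because $\operatorname{arctanh}$ has only odd denominators, and your exponential integrality lemma plus Legendre's formula finishes. This is more conceptual than the paper's bookkeeping: it explains why the optimal bound is exactly $v_2(n!)$, and both the functional equation and the lemma ``$n!\,[c^n]\exp(A)\in\mathbb{Z}_{(2)}$ whenever $A\in\mathbb{Z}_{(2)}[[c]]$ with $A(0)=0$'' are cleanly reusable. I checked your identity to low order: $w=c-c^3+\cdots$, $A=-c+\tfrac{2}{3}c^3+\cdots$, and $\exp(A)=1-c+\tfrac12 c^2+\tfrac12 c^3-\tfrac58 c^4+\cdots$, matching the Example.

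One point you should make explicit: your argument controls only the $2$-adic valuation, giving $v_2(a_n)\ge -(n-s_2(n))$, whereas the conclusion $\operatorname{denom}(a_n)\mid 2^{\,n-s_2(n)}$ also requires that no odd prime divides the denominator of $a_n$. The exponential lemma does not supply this --- indeed your exponent $A$ genuinely has odd denominators (its $c^3$-coefficient is $\tfrac{2}{3}$), and these cancel only because of the special form of $A$. The missing half-line is that $\Phi^2=h(-c)/h(c)\in\mathbb{Z}[[c]]$ has constant term $1$, hence $\Phi\in\mathbb{Z}[\tfrac12][[c]]$ (by the square-root recursion, or because $\binom{1/2}{k}\in\mathbb{Z}[\tfrac12]$ in the binomial expansion); this fact is also implicit in the paper's setup, where the characteristic series are placed in $\mathbb{Z}[\tfrac12][[x]]$ from the start. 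With that sentence added, your proof is complete.
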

\begin{proof}
To compute the coefficient of $c^n$, we may formally suppose that there is a universal bundle $V$ such that all the Pontryagin classes $p_i(V)$ vanish. Then  we have 
\[
\mu _{t}^{Spin^c}(V) 
=\frac{\mu _{t}^{Spin}(V\oplus \xi)}{\mu _{t}^{U}(\xi)}
=\frac{\sqrt{h(c)h(-c)}}{h(c)}=\sqrt{\frac{h(-c)}{h(c)}},
\]
where $h(c)=1+\sum\limits_{k=0}^{\infty}c^{2^k}$. Write 
\[
\mu _{t}^{Spin^c}(V)= \sum\limits_{k=0}^{\infty} a_k c^k\ \ \  {\rm and} \ \ \ \ 
\frac{h(-c)}{h(c)}=\sum\limits_{k=0}^{\infty} b_k c^k. \ \ \ 
\]
Note that $a_0=b_0=1$, and each $b_k\in 2\mathbb{Z}$ as $\sum\limits_{k=0}^{\infty} b_k c^k=\frac{h(-c)}{h(c)}\equiv 1~{\rm mod}~2$. 
We want to show that $2^{n-s_2(n)} a_n\in \mathbb{Z}$ for each $n$.

The previous equalities imply that $(\sum\limits_{k=0}^{\infty} a_k c^k)^2=\sum\limits_{k=0}^{\infty} b_k c^k$, and then $\sum\limits_{k=0}^{n} a_k a_{n-k}= b_n$. Accordingly, we have the recursive formula
\[
2a_n=b_n-\sum\limits_{k=1}^{n-1} a_k a_{n-k}.
\]
By induction we may suppose that $2^{k-s_2(k)} a_k\in \mathbb{Z}$ for any $k<n$. Since\footnote{
	The classical Legendre’s formula states that for a prime $p$, the exponent of $p$ in the
	prime factorization of $m!$ is
	\[
	\nu_p(m!)=\sum_{k\ge1}\left\lfloor \frac{m}{p^k}\right\rfloor
	=\frac{m-s_p(m)}{p-1},
	\]
	where $s_p(m)$ denotes the sum of the digits of $m$ in base $p$ (for instance see \cite[page 77]{Mo12}). Specialize to $p=2$, one has $\nu_2(n!)=n-s_2(n)$. Since $\frac{n!}{k!(n-k)!}$ is an integer, one has $\nu_2(n!)\geq \nu_2(k!)+\nu_2((n-k)!)$, which implies that $s_2(n)\leq s_2(k)+s_2(n-k)$.
} 
$$s_2(n)\leq s_2(k)+s_2(n-k), $$  it follows that  
\[
  2^{n-s_2(n)} a_k a_{n-k}=  2^{s_2(k)+s_2(n-k)-s_2(n)} \cdot (2^{k-s_2(k)} a_k)   \cdot (2^{n-k-s_2(n-k)} a_{n-k}) 
  \in \mathbb{Z},
\]
and then 
\[
2^{n+1-s_2(n)} a_n=2^{n-s_2(n)} b_n -\sum\limits_{k=1}^{n-1}\Big( 2^{n-s_2(n)} a_k a_{n-k} \Big) \in \mathbb{Z}.
\]
To prove that $2^{n-s_2(n)} a_n\in \mathbb{Z}$, it is equivalent to show that the right hand of the preceding equality is an even integer, and is further equivalent to the sum $2^{n-s_2(n)}\sum\limits_{k=1}^{n-1} a_k a_{n-k} \in 2\mathbb{Z}$.

Observe that in the sum $\sum\limits_{k=1}^{n-1} a_k a_{n-k}$, each term $a_k a_{n-k}$ appears twice except the term $a_{m}^2$ when $n=2m$. In the latter case, 
\[
2^{n-s_2(n)}a_{m}^2=2^{2m-s_2(m)}a_{m}^2 =   2^{s_2(m)}   \cdot (2^{m-s_2(m)} a_m)^2 \in 2\mathbb{Z}.
\]
Hence, in any case $2^{n-s_2(n)}\sum\limits_{k=1}^{n-1} a_k a_{n-k} \in 2\mathbb{Z}$. It follows that $2^{n+1-s_2(n)} a_n\in 2\mathbb{Z}$ and then $2^{n-s_2(n)} a_n\in \mathbb{Z}$.
\end{proof}

%----------------------------------------------------------------------------------------------------------------------------------------------------------------------------------------------------------%
\section{Spin$^c$ almost flat manifolds bound}
\label{sec: spincbound}

Let $(M, c)$ be an $n$-dimensional closed spin$^c$ smooth manifold. Then $M$ has rational characteristic numbers of the form 
\[
\langle \lambda(c, p_1, \cdots, p_{\lfloor \frac{n}{4} \rfloor }),  \ [M]\rangle,
\]
where $\lambda$ is a rational polynomial in $c$ and the Pontryagin classes $p_i(M)$ of $M$, homogeneous of total degree $n$. We refer to any such characteristic number as a {\it $c$-Pontryagin number} of $(M,c)$.

The following lemma characterizes oriented and spin$^c$ cobordisms in terms of characteristic classes; see \cite{Wal60} for the oriented case and \cite[Theorem, p.~337]{Sto68} for the spin$^c$ case. 
\begin{lemma}\label{spinc-bord-lemma}
Two closed oriented manifolds are oriented cobordant if and only if they have the same Stiefel-Whitney numbers and Pontryagin numbers. 

Two closed spin$^c$ manifolds are spin$^c$ cobordant if and only if they have the same Stiefel-Whitney numbers and $c$-Pontryagin numbers. ~$\qqed$ 
\end{lemma}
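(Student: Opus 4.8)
The ``only if'' directions are immediate: Stiefel--Whitney, Pontryagin, and $c$-Pontryagin numbers are all evaluations of characteristic classes on the fundamental class, hence cobordism invariants in their respective categories. The substance therefore lies in the ``if'' directions, each of which is equivalent to the injectivity of the total characteristic-number homomorphism on the relevant bordism group, i.e. to the assertion that a manifold all of whose listed numbers vanish is null-cobordant. The plan is to reduce each statement to the known structure of the corresponding Thom spectrum and then match the listed numbers to the invariants that detect its homotopy.

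For the oriented case I would invoke two classical inputs. First, Thom's computation shows that $\Omega^{SO}_*\otimes\mathbb{Q}$ is a polynomial algebra whose classes are separated by Pontryagin numbers, so the rational part of $[M]-[N]$ is seen by Pontryagin numbers. Second, Wall's structure theorem \cite{Wal60} asserts that $\Omega^{SO}_*$ has no odd torsion and that all its $2$-torsion has order $2$; concretely, $MSO$ localized at $2$ splits as a wedge of suspensions of $H\mathbb{Z}_{(2)}$ and $H\mathbb{Z}/2$. Thus $\Omega^{SO}_*\otimes\mathbb{Z}_{(2)}$ is detected by integral and mod-$2$ ordinary-cohomology characteristic numbers, the former reducing to Pontryagin numbers and the latter being Stiefel--Whitney numbers. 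A class with all Pontryagin numbers zero is then torsion, hence $2$-torsion, hence lies in the $H\mathbb{Z}/2$-summands where it is detected by Stiefel--Whitney numbers; so vanishing of both families forces the class to vanish.

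For the spin$^c$ case I would run the same strategy using the structure of $MSpin^c$ underlying Stong's computation \cite{Sto68}, which is the connective complex $K$-theory analogue of the Anderson--Brown--Peterson description of $MSpin$. The structural facts I would use are that $\Omega^{Spin^c}_*$ has no odd torsion and that, localized at $2$, $MSpin^c$ is built from suspensions of $ku$ and of $H\mathbb{Z}/2$. The crucial simplification over the spin case is that $ku_*\cong\mathbb{Z}[v]$ is torsion-free, so every torsion class of $\Omega^{Spin^c}_*$ lives in the $H\mathbb{Z}/2$-summands and is detected by mod-$2$ cohomology characteristic numbers, i.e. Stiefel--Whitney numbers. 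The torsion-free part is detected by the $ku$-characteristic numbers, which by the Atiyah--Singer index theorem are indices of twisted spin$^c$ Dirac operators; their Chern-character form $\langle e^{c/2}\,\hat{A}(M)\,\mathrm{ch}(\cdot),[M]\rangle$ is a rational polynomial in $c$ and the Pontryagin classes, hence a $c$-Pontryagin number. Because $ku_*$ is torsion-free, rational detection of the $ku$-part suffices, and the two families together form a complete invariant.

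The main obstacle is the $2$-primary bookkeeping: pinning down precisely the splitting of $MSpin^c_{(2)}$ into $ku$- and $H\mathbb{Z}/2$-summands, and then verifying that the $ku$-theoretic characteristic numbers are exhausted by the stated family of $c$-Pontryagin numbers. The delicate point is not that each individual index is a $c$-Pontryagin number---that is the index theorem---but that the full collection of functionals on the torsion-free quotient arising from the $ku$-cohomology of the relevant classifying space is spanned by rational polynomials in $c$ and the $p_i$; this uses the injectivity of the Chern character on $ku_*$ together with the freeness of $ku_*$ to rule out any further $K$-theoretic invariant beyond the rational $c$-Pontryagin numbers. The remaining steps are the routine translation, via the Thom isomorphism, between homotopy-detecting cohomology classes and characteristic numbers.
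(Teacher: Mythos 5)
The paper does not prove this lemma: it is stated as a known result, with citations to Wall \cite{Wal60} for the oriented case and to Stong \cite[Theorem, p.~337]{Sto68} for the spin$^c$ case. Your sketch is a correct reconstruction of the standard proofs behind precisely those citations---the Thom--Wall splitting of $MSO$ localized at $2$ into Eilenberg--MacLane summands, and the $ku \vee H\mathbb{Z}/2$ splitting of $MSpin^c$ at $2$ together with the Chern-character identification of the $ku$-characteristic numbers with $c$-Pontryagin numbers---so it takes essentially the same route the paper implicitly relies on.
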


\begin{lemma}\label{chern-lemma}
Let $M$ be an almost flat manifold. Then the rational Pontryagin classes of $M$ all vanish. 
\end{lemma}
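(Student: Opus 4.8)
The plan is to bypass curvature estimates entirely and instead exploit the known topological structure of almost flat manifolds. By Gromov's theorem \cite{Gro78} (smoothly refined by Ruh \cite{Ruh82}), $M$ is finitely covered by a nilmanifold; fix such a smooth covering $\pi\colon \tilde M \to M$ of degree $d$, with $\tilde M$ diffeomorphic to $\Lambda \backslash N$ for a simply connected nilpotent Lie group $N$ and a lattice $\Lambda$. The crucial observation is that a nilmanifold is parallelizable: a basis of the Lie algebra of $N$, extended to left-invariant vector fields, trivializes $TN$, and this frame is invariant under the (left) lattice action, hence descends to a global frame on $\tilde M$. Consequently $T\tilde M$ is a trivial bundle, so $p_i(\tilde M) = 0$ for all $i \ge 1$.

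Next I would transport this vanishing down to $M$. Since $\pi$ is a covering it is a local diffeomorphism, whence $\pi^* TM \cong T\tilde M$ and therefore $\pi^* p_i(M) = p_i(\pi^* TM) = p_i(T\tilde M) = 0$ in $H^{4i}(\tilde M;\mathbb{Q})$. It remains to recover $p_i(M)$ from the vanishing of its pullback, and this is exactly where the rational coefficients enter: the standard transfer homomorphism $\pi_!\colon H^*(\tilde M;\mathbb{Q}) \to H^*(M;\mathbb{Q})$ for a finite cover satisfies $\pi_! \circ \pi^* = d\cdot \mathrm{id}$, so $\pi^*$ is injective on rational cohomology because $d$ is invertible over $\mathbb{Q}$. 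Hence $\pi^* p_i(M) = 0$ forces $p_i(M) = 0$ in $H^{4i}(M;\mathbb{Q})$, which is the assertion.

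There is an alternative, purely differential-geometric route through Chern--Weil theory: the rational class $p_i(M)$ is represented by a universal polynomial of degree $2i$ in the curvature $2$-form $\Omega_{g_\epsilon}$ of $g_\epsilon$, whose pointwise norm is $O(\epsilon^{2i})$ once $|K_{g_\epsilon}| < \epsilon$. This at once kills the top Pontryagin \emph{numbers}, since Gromov's diameter and curvature bounds also bound $\operatorname{vol}(M,g_\epsilon)$ by a dimensional constant, forcing the integer $\langle p_{n/4}(M),[M]\rangle = O(\epsilon^{n/2}) \to 0$ to vanish. The main obstacle in upgrading this to the vanishing of the \emph{classes} is that pairing $p_i(M)$ against a lower-dimensional homology class requires integrating the (small) Pontryagin form over a fixed representing cycle, and controlling the $g_\epsilon$-volume of such a cycle as the metrics collapse is delicate. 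For this reason I expect the structural argument above to be both shorter and more robust, its only nontrivial inputs being the parallelizability of nilmanifolds and the rational injectivity of $\pi^*$, both of which are standard.
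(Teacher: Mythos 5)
Your proposal is correct and is essentially the paper's own proof: both invoke Gromov's theorem to obtain a finite nilmanifold cover, use parallelizability of nilmanifolds to kill the Pontryagin classes upstairs, and then apply the transfer map (with $\mathrm{tr}\circ\pi^*=d\cdot\mathrm{id}$ and $d$ invertible over $\mathbb{Q}$) to conclude the rational classes on $M$ vanish. Your extra details (descent of the left-invariant frame, $\pi^*TM\cong T\widetilde{M}$) and the sketched Chern--Weil alternative are fine but do not change the argument.
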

\begin{proof}
For the almost flat manifold $M$, by a classical result of Gromov \cite{Gro78} there is a finite $d$-cover $\widetilde{M}\stackrel{\pi}{\larrow} M$, where $\widetilde{M}$ is a nilmanifold. Then 
$
{\rm tr}\circ \pi^\ast(p_i(M))=d\cdot p_i(M)
$, where $H^\ast(\widetilde{M};\mathbb{Z})\stackrel{{\rm tr}}{\larrow} H^\ast(M;\mathbb{Z})$ is the transfer map of a finite cover. Since a nilmanifold is parallelizable, all Pontryagin classes of $\widetilde{M}$ vanish integrally. It follows that all rational Pontryagin classes of $M$ vanish. 
\end{proof}

\begin{lemma}\label{index-lemma}
Let $(M, c)$ be a $2n$-dimensional almost flat spin$^c$ manifold. Then $2^n n!~|~\langle c^{n}, [M]\rangle$.
\end{lemma}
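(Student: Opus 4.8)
The plan is to invoke the Atiyah--Singer index theorem for the spin$^c$ Dirac operator. Since $M$ is $2n$-dimensional and carries the spin$^c$ structure determined by $c$, the associated twisted Dirac operator $D^c$ is elliptic, so its index is a genuine Fredholm index and in particular an integer. The index theorem identifies this integer with a characteristic number:
\[
\operatorname{Ind}(D^c)=\big\langle \widehat{A}(M)\, e^{c/2},\, [M]\big\rangle,
\]
where $\widehat{A}(M)$ is the total $\widehat{A}$-class, a power series in the rational Pontryagin classes $p_i(M)$ with constant term $1$, and the factor $e^{c/2}$ arises from twisting the spinor bundle by the (formal) square root of the determinant line bundle.

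The decisive simplification comes from Lemma~\ref{chern-lemma}. Because $M$ is almost flat, all of its rational Pontryagin classes vanish, and hence $\widehat{A}(M)=1$ in $H^*(M;\mathbb{Q})$. The index formula therefore collapses to
\[
\operatorname{Ind}(D^c)=\big\langle e^{c/2},\, [M]\big\rangle.
\]
Expanding $e^{c/2}=\sum_{k\ge 0}\tfrac{1}{k!}\big(\tfrac{c}{2}\big)^k$ and extracting the top-degree component (degree $2n$, i.e.\ $k=n$) gives
\[
\operatorname{Ind}(D^c)=\frac{1}{2^{n}\,n!}\,\big\langle c^{n},\,[M]\big\rangle.
\]

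Since the left-hand side is an integer, the divisibility $2^{n}n!\mid\langle c^{n},[M]\rangle$ follows at once. I expect no serious obstacle here: the argument is short once the two ingredients are in place. The only points requiring care are the correct normalization of the spin$^c$ index formula---in particular the $e^{c/2}$ factor rather than $e^{c}$, which is exactly what produces the sharp power $2^{n}$---and the integrality of the Dirac index, which is automatic. The genuine content is geometric rather than computational: it is the almost-flat hypothesis, routed through Gromov's nilmanifold covering in Lemma~\ref{chern-lemma}, that kills $\widehat{A}(M)$ and thereby converts the index theorem into the desired arithmetic statement about $\langle c^{n},[M]\rangle$.
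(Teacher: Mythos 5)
Your proposal is correct and follows exactly the paper's own argument: apply the Atiyah--Singer index theorem to the spin$^c$ Dirac operator, use Lemma~\ref{chern-lemma} to reduce $\widehat{A}(M)e^{c/2}$ to $\frac{c^n}{2^n n!}$, and conclude from integrality of the index. No gaps and no meaningful differences from the paper's proof.
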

\begin{proof} Consider the spin$^c$ Dirac operator $D^c$ on $M$. The Atiyah-Singer index theorem asserts that
\[
 \mathrm{Ind}(D^c)=\int_M\widehat{A}(M)e^{c/2} \in \mathbb{Z}.
 \]
Since all the rational Pontryagin classes of $M$ vanish, we see that 
\[
 \mathrm{Ind}(D^c)=\int_M\widehat{A}(M)e^{c/2}=\int_M\frac{c^n}{n!\cdot 2^n}.
  \]
 The desired divisibility follows. 
\end{proof}

Following \cite[Chapter 16]{MS75}, it is convenient to denote $S_{I}=S_{j_1}\cdots S_{j_r}$ for any multiplicative symbol system $\{S_{j}\}_{j\in \mathbb{Z}^{+}}$ indexed by positive integers and any partition $I=\{j_1,\ldots, j_r\}$ of a fixed positive integer $m$.

\begin{lemma}\label{sw-lemma}
Let $(M, c)$ be an $n$-dimensional almost flat spin$^c$ manifold. Then its Stiefel-Whitney numbers all vanish.
\end{lemma}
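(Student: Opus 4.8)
The plan is to reduce the vanishing of all Stiefel--Whitney numbers of $(M,c)$ to a single divisibility statement about integral spin$^c$ Wu numbers, which is then settled by colliding Lemma~\ref{denom-lemma} with Lemma~\ref{index-lemma} (via Lemma~\ref{chern-lemma}). I would begin by invoking the classical theorem of Wu \cite{Wu50, MS75}: the total Stiefel--Whitney class and the total mod-$2$ Wu class $v=1+v_1+v_2+\cdots$ satisfy $w=\mathrm{Sq}(v)$, and the Wu classes are characterized by $\langle v_k\cup x,[M]\rangle=\langle \mathrm{Sq}^k x,[M]\rangle$ for all $x\in H^{n-k}(M;\mathbb{Z}/2)$. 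Together these express every Stiefel--Whitney number as a $\mathbb{Z}/2$-polynomial in the Wu numbers $\langle v_{i_1}\cdots v_{i_r},[M]\rangle$, so it suffices to prove that all Wu numbers of $M$ vanish.

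Next I would exploit the spin$^c$ hypothesis to discard all \emph{odd} Wu classes. Since $M$ is oriented, $v_1=w_1=0$, and Wu's formula gives $\mathrm{Sq}^1 v_{2k}=v_1 v_{2k}+v_{2k+1}$, whence $v_{2k+1}=\mathrm{Sq}^1 v_{2k}$. On the other hand, each even Wu class is the mod-$2$ reduction $v_{2k}=\rho_2(\mu_{2k}^{Spin^c})$ of an integral class; writing $\mathrm{Sq}^1=\rho_2\circ\tilde\beta$ with $\tilde\beta$ the integral Bockstein and using $\tilde\beta\circ\rho_2=0$, we obtain $\mathrm{Sq}^1 v_{2k}=0$ and hence $v_{2k+1}=0$ for every $k$. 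Thus the total Wu class of a spin$^c$ manifold is concentrated in even degrees, and every Wu number has the form $\langle v_{2k_1}\cdots v_{2k_r},[M]\rangle$ with $\sum_j 2k_j=n$. If $n$ is odd there are no such monomials and every Wu number vanishes immediately; so I may assume $n=2m$, hence $\sum_j k_j=m$.

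For the even case I would pass to the integral lift. Because $v_{2k_j}=\rho_2(\mu_{2k_j}^{Spin^c})$, the Wu number is the mod-$2$ reduction of the integer $\langle \prod_j \mu_{2k_j}^{Spin^c},[M]\rangle$. By Lemma~\ref{chern-lemma} all rational Pontryagin classes of $M$ vanish, so rationally $\mu_{2k_j}^{Spin^c}=a_{k_j}c^{k_j}$ and therefore $\langle \prod_j \mu_{2k_j}^{Spin^c},[M]\rangle=\big(\prod_j a_{k_j}\big)\langle c^m,[M]\rangle$. Now the two arithmetic inputs meet: Lemma~\ref{denom-lemma} bounds the denominator of $\prod_j a_{k_j}$ by $2^{\sum_j(k_j-s_2(k_j))}=2^{m-\sum_j s_2(k_j)}$, while Lemma~\ref{index-lemma} gives $2^m\mid\langle c^m,[M]\rangle$. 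Consequently the $2$-adic valuation of the integer $\langle \prod_j \mu_{2k_j}^{Spin^c},[M]\rangle$ is at least $m-(m-\sum_j s_2(k_j))=\sum_j s_2(k_j)\ge 1$, so it is even and the Wu number vanishes. Running this over all partitions kills every Wu number, and hence every Stiefel--Whitney number.

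The main obstacle is precisely this final $2$-adic balancing. The estimate of Lemma~\ref{denom-lemma} is sharp (as the preceding Example confirms), so the divisibility $2^m\mid\langle c^m,[M]\rangle$ extracted from the Atiyah--Singer index theorem only just overcomes the denominator of $\prod_j a_{k_j}$; the entire argument hinges on the residual factor $2^{\sum_j s_2(k_j)}$, which is guaranteed to be at least $2$ because $m\ge 1$ forces some $k_j\ge 1$. The conceptually essential preliminary step is the observation that spin$^c$-ness annihilates all odd Wu classes through $\mathrm{Sq}^1$ of an integral reduction, for this is exactly what makes the purely even-degree integral spin$^c$ Wu class strong enough to control \emph{every} Stiefel--Whitney number.
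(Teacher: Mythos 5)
Your proof follows the paper's own strategy in all essentials: reduce Stiefel--Whitney numbers to mod-$2$ Wu numbers, use Lemma~\ref{chern-lemma} to write each integral spin$^c$ Wu class rationally as $a_kc^k$, and then balance the denominator bound of Lemma~\ref{denom-lemma} against the index-theoretic divisibility of Lemma~\ref{index-lemma}. Your final $2$-adic estimate (valuation at least $\sum_j s_2(k_j)\ge 1$) is the same computation as the paper's, run with the weaker input $2^m\mid\langle c^m,[M]\rangle$ rather than $2^m m!\mid\langle c^m,[M]\rangle$; that still suffices for parity (the paper's stronger form yields divisibility by $2^m$, recorded in Remark~\ref{c-remark}).

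There is, however, one genuinely false step: the claimed ``Wu formula'' $\mathrm{Sq}^1 v_{2k}=v_1v_{2k}+v_{2k+1}$. That identity is Wu's formula for \emph{Stiefel--Whitney} classes, $\mathrm{Sq}^1 w_{2k}=w_1w_{2k}+w_{2k+1}$; it does not transfer to Wu classes. Concretely, on the Wu manifold $SU(3)/SO(3)$ (closed, oriented, of dimension $5$) one has $v_1=0$ and $v_3=0$ (since $v_k=0$ whenever $2k>\dim M$), while $\mathrm{Sq}^1 v_2=\mathrm{Sq}^1 w_2=w_3\neq 0$; so the formula fails, and your derivation of $v_{2k+1}=0$ breaks down as written. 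The fact you need is nonetheless true, and it is already contained in the construction you quote from Section~\ref{sec: spincWu}: there it is shown that the mod-$2$ reduction of the \emph{total} class $\mu_t^{Spin^c}(M)$ equals the total mod-$2$ Wu class of $M$, and since $\mu_t^{Spin^c}$ is concentrated in even degrees, all odd Wu classes of a spin$^c$ manifold vanish --- no Steenrod-operation argument is needed (and none could work uniformly for all oriented manifolds, as the Wu manifold, which is not spin$^c$, shows). With that one-line replacement your argument coincides with the paper's proof.
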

\begin{proof}
Since the rational Pontryagin classes of the almost flat manifold $M$ all vanish by Lemma \ref{chern-lemma}, each integral spin$^c$ Wu classes $\mu _{2i}^{Spin^c}(M)=a_i c^i$, where $a_i$ is the coefficient of $c^i$ in the expression of the universal integral spin$^c$ Wu class $\mu _{2i}^{Spin^c}$. Hence, any integral spin$^c$ Wu number vanishes if $n$ is odd, or is of the form $a_I c^{m}$ with $I=\{j_1,\ldots, j_r\}$ a partition of $m$ if $n=2m$ is even. In the latter case, recall the classical Legendre formula 
$$m!=2^{m-s_2(m)}\cdot (\text{an odd number})$$
 and the fact $s_2(m)\leq s_2(j_1)+\cdots+s_2(j_r)$. Then by Lemmas \ref{denom-lemma} and \ref{index-lemma}, 
\[
\langle a_I c^{m}, [M]\rangle \in 2^m m! a_I \mathbb{Z}\subseteq  2^{s_2(j_1)+\cdots+s_2(j_r)}m!  \mathbb{Z}  \subseteq 
2^{m}\mathbb{Z}.
\]
In particular, the mod-$2$ reduction of any integral spin$^c$ Wu number of $M$ vanishes, or equivalently, the mod-$2$ Wu numbers of $M$ all vanish. Since Stiefel-Whitney numbers are equivalent to mod-$2$ Wu numbers, the Stiefel-Whitney numbers of $M$ all vanish as well. 
\end{proof}

\begin{remark}\label{c-remark}
In the proof of Lemma \ref{sw-lemma}, we have showed that, for a $2m$-dimensional almost flat spin$^c$ manifold $M$,
\[
2^m~|~\langle \mu _{2I}^{Spin^c}(M), \   [M] \rangle
\]
for any partition $I$ of $m$.
\end{remark}

The following proposition provides a simple criterion when almost flat spin$^c$ manifold bounds.  
\begin{proposition}\label{cn=0-prop}
Let $(M, c)$ be an $n$-dimensional almost flat spin$^c$ manifold. Then $(M, c)$ bounds a spin$^c$ manifold if and only if $\langle c^{\lfloor \frac{n}{2} \rfloor }, [M]\rangle =0$.
\end{proposition}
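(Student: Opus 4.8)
The plan is to reduce the statement to the cobordism criterion of Lemma~\ref{spinc-bord-lemma} and then exploit the almost flat hypothesis to discard every characteristic number except the top power of $c$. By the spin$^c$ part of Lemma~\ref{spinc-bord-lemma}, the pair $(M,c)$ bounds a spin$^c$ manifold exactly when it is spin$^c$ cobordant to the empty manifold, that is, when all of its Stiefel--Whitney numbers and all of its $c$-Pontryagin numbers vanish. The Stiefel--Whitney numbers vanish automatically by Lemma~\ref{sw-lemma}, so the entire problem collapses to controlling the $c$-Pontryagin numbers of $(M,c)$.

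First I would analyze an arbitrary $c$-Pontryagin number $\langle \lambda(c,p_1,\ldots,p_{\lfloor n/4\rfloor}),\,[M]\rangle$, where $\lambda$ is homogeneous of total degree $n$. Expanding $\lambda$ as a $\mathbb{Q}$-linear combination of monomials $c^{a}p_1^{b_1}p_2^{b_2}\cdots$, I would note that the pairing against $[M]$ factors through rational cohomology $H^n(M;\mathbb{Q})$. By Lemma~\ref{chern-lemma} every rational Pontryagin class of $M$ vanishes, so any monomial carrying a positive power of some $p_i$ already vanishes in $H^{\ast}(M;\mathbb{Q})$ and hence contributes nothing to the pairing; in particular, torsion Pontryagin classes, although possibly nonzero integrally, are invisible to the rational pairing against $[M]$. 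The only monomial that can survive is therefore the pure power $c^{n/2}$, which exists only when $n$ is even. Thus the collection of all $c$-Pontryagin numbers of $(M,c)$ reduces to the single quantity $\langle c^{\lfloor n/2\rfloor},[M]\rangle$.

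It then remains to assemble the two parities. When $n=2m$ is even we have $\lfloor n/2\rfloor=m$ and $\deg c^{m}=n$, so $(M,c)$ bounds if and only if $\langle c^{m},[M]\rangle=0$, which is precisely the asserted criterion. When $n$ is odd, no monomial in the even-degree classes $c$ and $p_i$ can attain odd total degree $n$, so every $c$-Pontryagin number vanishes for degree reasons and $(M,c)$ bounds unconditionally; at the same time $c^{\lfloor n/2\rfloor}$ has degree $n-1\neq n$, so $\langle c^{\lfloor n/2\rfloor},[M]\rangle=0$ holds automatically. Hence the equivalence is valid, and trivially so, in the odd case, consistent with the odd-dimensional conclusion of Theorem~\ref{spinc-thm-intro}.

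I do not expect a genuine obstacle here: the argument is a direct assembly of the preceding lemmas, and the only point requiring care is the simultaneous treatment of the two parities together with the observation that rationally trivial (possibly torsion) Pontryagin classes cannot contribute to $\langle\,\cdot\,,[M]\rangle$. Once this is made explicit, the proposition follows from Lemmas~\ref{spinc-bord-lemma}, \ref{chern-lemma}, and~\ref{sw-lemma} with no further computation.
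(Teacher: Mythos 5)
Your proposal is correct and follows exactly the paper's own route: reduce via Lemma~\ref{spinc-bord-lemma} to the vanishing of all Stiefel--Whitney and $c$-Pontryagin numbers, dispose of the former by Lemma~\ref{sw-lemma}, and use Lemma~\ref{chern-lemma} to see that $\langle c^{\lfloor n/2\rfloor},[M]\rangle$ is the only $c$-Pontryagin number that can survive. Your additional remarks (that torsion Pontryagin classes are invisible to the rational pairing, and that the odd-dimensional case holds vacuously for degree reasons) merely make explicit what the paper's terse proof leaves implicit.
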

\begin{proof}
By Lemma \ref{spinc-bord-lemma}, $(M, c)$ bounds a spin$^c$ manifold if and only if the Stiefel-Whitney numbers and $c$-Pontryagin numbers all vanish. By Lemma \ref{sw-lemma}, its Stiefel-Whitney numbers all vanish. By Lemma \ref{chern-lemma}, only the number $\langle c^{\lfloor \frac{n}{2} \rfloor }, [M]\rangle$ can be nontrivial among all the $c$-Pontryagin numbers. Hence, the proposition follows. 
\end{proof}

We are in the place to prove Theorem \ref{spinc-thm-intro}.
\begin{proof}[Proof of Theorem \ref{spinc-thm-intro}]
If $n$ is even, the Stiefel-Whitney numbers of $M$ all vanish by Lemma \ref{sw-lemma}, while the Pontryagin numbers of $M$ all vanish by Lemma \ref{chern-lemma}. By Lemma \ref{spinc-bord-lemma} it follows that $(M, c)$ bounds an orientable manifold. 

If $n$ is odd, then $\langle c^{\lfloor \frac{n}{2} \rfloor }, [M]\rangle =0$ automatically. By Proposition \ref{cn=0-prop} it follows that $(M, c)$ bounds a spin$^c$ manifold.
\end{proof}

\begin{remark}\label{nasen}
	The proof in fact yields a more general result than Theorem \ref{spinc-thm-intro}. Indeed, we show that if a spin$^c$ manifold $M$ has all rational Pontryagin classes vanishing, then it bounds an orientable manifold. Moreover, if $n$ is odd, then $M$, equipped with any spin$^c$ structure, bounds a spin$^c$ manifold.
	\end{remark}

%----------------------------------------------------------------------------------------------------------------------------------------------------------------------------------------------------------%
\section{Special Holonomies imply spin$^c$}
\label{sec: holo}

Let $M$ be an $n$-dimensional almost flat oriented manifold. By the results of Gromov \cite{Gro78} and Ruh \cite{Ruh82}, $M$ is diffeomorphic to an infranilmanifold $\Gamma \backslash L\rtimes G\slash G$, where $L$ is a simply connected nilpotent Lie group, $G$ is a finite subgroup of ${\rm Aut}(L)$, and $\Gamma$ is a discrete torsion-free cocompact subgroup of $L\rtimes G$ with a short exact sequence 
\[
1\larrow \Gamma \cap L \larrow \Gamma \larrow G\larrow 1.
\]
 The finite group $G$ is called the {\it holonomy group} of the almost flat manifold $M$. 

Let $\varphi: M\larrow BSO(n)$ be the classifying map of the tangent bundle $TM$. It is well known that $\varphi$ factors through the classifying space $BG$ of $G$:
\[
\varphi: M\stackrel{\eta}{\larrow} BG\stackrel{}{\larrow} BSO(n),
\]
where $\eta$ is the classifying map of the regular covering $\Gamma \cap L\backslash L \larrow \Gamma \backslash L\rtimes G\slash G$; see, for instance, \cite[Section 3]{Xio16}.

The following lemma establishes a sufficient condition under which $M$ is spin$^c$. 
For a finite group $A$, denote by ${\rm Syl}_2(A)$ the $2$-Sylow subgroup of $A$. Recall $H^\ast(A)=H^\ast(BA)$ and $H_\ast(A)=H_\ast(BA)$ are the group cohomology and homology of $A$, respectively.  
\begin{lemma}\label{H3lemma}
If $H^3({\rm Syl}_2(G); \mathbb{Z})=0$, then the almost flat manifold $M$ is spin$^c$.
\end{lemma}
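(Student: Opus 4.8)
The plan is to translate the spin$^c$ condition into the vanishing of the integral third Stiefel--Whitney class and then to exploit both the factorization of the classifying map through $BG$ and a transfer argument localized at the Sylow $2$-subgroup. Recall that, by the Bockstein long exact sequence attached to $0\to\mathbb{Z}\xrightarrow{2}\mathbb{Z}\to\mathbb{Z}/2\to 0$, the class $\omega_2(TM)$ lies in the image of $\rho_2$ if and only if $\beta(\omega_2(TM))=0$, where $\beta\colon H^2(-;\mathbb{Z}/2)\to H^3(-;\mathbb{Z})$ is the integral Bockstein. Writing $W_3=\beta(\omega_2)$, the manifold $M$ is therefore spin$^c$ exactly when $W_3(TM)=0$, and it suffices to establish this vanishing.

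First I would use the factorization $\varphi\colon M\xrightarrow{\eta} BG\to BSO(n)$. Pulling the universal class $\omega_2$ back along $BG\to BSO(n)$ produces a class $v\in H^2(BG;\mathbb{Z}/2)$ with $\omega_2(TM)=\eta^{\ast}v$. Since the Bockstein is natural, we obtain $W_3(TM)=\eta^{\ast}\beta(v)$, where $\beta(v)\in H^3(BG;\mathbb{Z})=H^3(G;\mathbb{Z})$. Hence it is enough to show that $\beta(v)$ already vanishes in $H^3(G;\mathbb{Z})$. The essential observation is that $\beta(v)$ is annihilated by $2$, so it lies in the $2$-primary part of $H^3(G;\mathbb{Z})$.

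The key step is the classical transfer argument for the Sylow $2$-subgroup $P={\rm Syl}_2(G)$. The composite of restriction followed by transfer,
\[
H^{\ast}(G;\mathbb{Z})\longrightarrow H^{\ast}(P;\mathbb{Z})\longrightarrow H^{\ast}(G;\mathbb{Z}),
\]
is multiplication by the index $[G:P]$, which is odd. Thus multiplication by $[G:P]$ acts as an isomorphism on the $2$-primary part, and consequently the restriction map $H^3(G;\mathbb{Z})\to H^3(P;\mathbb{Z})$ is injective on the $2$-primary component. By hypothesis $H^3(P;\mathbb{Z})=0$, so the $2$-primary part of $H^3(G;\mathbb{Z})$ must vanish. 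Since $\beta(v)$ is $2$-torsion, it lies in precisely this part, and therefore $\beta(v)=0$.

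Combining these steps gives $W_3(TM)=\eta^{\ast}\beta(v)=0$, so $M$ is spin$^c$. I expect the only point demanding care to be the transfer/restriction step, namely verifying that restriction is injective on the $2$-primary component; but this is a standard fact in the cohomology of finite groups, so I do not anticipate a genuine obstacle. A minor bookkeeping point is to record cleanly that $\beta$ of a mod-$2$ class is always $2$-torsion, which is what funnels $\beta(v)$ into the part of $H^3(G;\mathbb{Z})$ controlled by $P$.
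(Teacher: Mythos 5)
Your proof is correct, but it is organized differently from the paper's. The paper first treats the case $H^3(G;\mathbb{Z})=0$ by a homotopy-lifting argument: the fibration $BSpin^c(n)\to BSO(n)\xrightarrow{W_3}K(\mathbb{Z},3)$ shows that $BG\to BSO(n)$ lifts to $BSpin^c(n)$ when the composite to $K(\mathbb{Z},3)$ is null homotopic, which handles $M$ via the factorization through $BG$. For the Sylow hypothesis, the paper then passes to geometry: it invokes an odd-degree finite cover $\widehat{M}\to M$ of almost flat manifolds whose holonomy group is ${\rm Syl}_2(G)$, deduces $W_3(\widehat{M})=0$ from the first case, and concludes $W_3(M)=0$ from the odd covering degree together with the fact that $W_3$ has order $2$. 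You instead stay entirely in group cohomology: naturality of the Bockstein funnels $W_3(TM)$ through a $2$-torsion class $\beta(v)\in H^3(G;\mathbb{Z})$, and the restriction--corestriction identity $\operatorname{cor}\circ\operatorname{res}=[G:{\rm Syl}_2(G)]$ shows the $2$-primary part of $H^3(G;\mathbb{Z})$ injects into $H^3({\rm Syl}_2(G);\mathbb{Z})=0$. Both arguments rest on the same two pillars (an odd-index transfer and the order-$2$ property of $W_3=\beta(\omega_2)$), but your version requires strictly less geometric input: it never needs the existence of the covering manifold $\widehat{M}$ with prescribed Sylow holonomy, a nontrivial (if standard) consequence of the infranilmanifold structure, and it applies verbatim to any manifold whose tangent classifying map factors through $BG$ for a finite group $G$. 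What the paper's route buys in exchange is an explicit geometric reduction to Sylow holonomy, a covering-space technique it reuses elsewhere (compare Remark~\ref{HG-rmk}).
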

\begin{proof}
Consider the following homotopy lifting problem 
\[
\diagram
&& BSpin^c(n) \dto^{}  \\
M\rto^{\eta} & BG \rto^{} \ar@{.>}[ur]  & BSO(n) \dto^{W_3}\\
&& K(\mathbb{Z}, 3),
\enddiagram
\]
where the rightmost column is the standard homotopy fibration for spin$^c$ structure with the map $W_3$ representing the third integral Stiefel-Whitney class $W_3\in H^3(BSO(n); Z)$. 

Suppose that $H^3(G; \mathbb{Z})=0$. Then the composite $BG\larrow BSO(n)\stackrel{W_3}{\larrow} K(\mathbb{Z}, 3)$ is null homotopic, and the map $BG\larrow BSO(n)$ can be lifted to the homotopy fibre $BSpin^c(n)$. By the preceding diagram, it follows that $\varphi$ can be lifted to $BSpin^c(n)$. Hence, $M$ is spin$^c$ if $H^3(G; \mathbb{Z})=0$.

Suppose that $H^3({\rm Syl}_2(G); \mathbb{Z})=0$. There is an odd-degree finite covering $\widehat{M}\longrightarrow M$ of almost flat manifolds such that the holonomy group of $\widehat{M}$ is ${\rm Syl}_2(G)$. The previous argument shows that $\widehat{M}$ is spin$^c$, or equivalently $W_3(\widehat{M})=0$. However, since the class $W_3$ is the Bockstein of the second Stiefel-Whitney class $\omega_2 \in H^2(BSO(n);\mathbb{Z}/2)$, the class $W_3$ is of order $2$ as $\omega_2$ is. Therefore, the odd-degree finite covering $\widehat{M}\larrow M$ implies that $W_3(M)=0$, that is, $M$ is spin$^c$.
\end{proof}

With the following lemma, Lemma \ref{H3lemma} applies to the case considered by Davis and Fang \cite{DF16}.
\begin{lemma}\label{DFlemma}
If ${\rm Syl}_2(G)$ is cyclic or generalized quaternionic, then $H^3({\rm Syl}_2(G);\mathbb{Z})=0$.
\end{lemma}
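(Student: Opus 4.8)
The plan is to reduce the computation of $H^3$ to the vanishing of the Schur multiplier, and then to treat the two families of $2$-groups separately. Throughout, write $P = {\rm Syl}_2(G)$, a finite $2$-group that is assumed to be either cyclic or generalized quaternionic.

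First I would invoke the universal coefficient theorem. For any finite group $P$, the integral homology groups $H_i(P;\mathbb{Z})$ are finite for $i \geq 1$, so that $\mathrm{Hom}(H_i(P;\mathbb{Z}),\mathbb{Z}) = 0$ for $i \geq 1$, while $\mathrm{Ext}^1(A,\mathbb{Z}) \cong A$ for every finite abelian group $A$. The short exact sequence
\[
0 \longrightarrow \mathrm{Ext}^1(H_{n-1}(P;\mathbb{Z}),\mathbb{Z}) \longrightarrow H^n(P;\mathbb{Z}) \longrightarrow \mathrm{Hom}(H_n(P;\mathbb{Z}),\mathbb{Z}) \longrightarrow 0
\]
therefore collapses in degree $n = 3$ to an isomorphism $H^3(P;\mathbb{Z}) \cong H_2(P;\mathbb{Z})$, the Schur multiplier of $P$. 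Thus it suffices to prove that the Schur multiplier of $P$ vanishes; this is also the reduction that makes Lemma~\ref{DFlemma} dovetail with the Schur-multiplier hypothesis of Theorem~\ref{G-thm-intro}.

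For the cyclic case I would use the standard $2$-periodic free resolution of $\mathbb{Z}$ over $\mathbb{Z}[\mathbb{Z}/2^k]$ built from the norm element and $T-1$; the associated chain complex computes $H_2(\mathbb{Z}/2^k;\mathbb{Z}) = 0$ directly, recovering the familiar fact that every cyclic group has trivial Schur multiplier (equivalently, $H^{\mathrm{odd}}(\mathbb{Z}/2^k;\mathbb{Z})=0$).

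For the generalized quaternion case, the required input is that $Q_{2^k}$ has periodic cohomology of period $4$ and trivial Schur multiplier, i.e. $H_2(Q_{2^k};\mathbb{Z}) = 0$. I expect this to be the main obstacle: unlike the cyclic case it is not a one-line resolution argument. The cleanest self-contained route is to use the free action of $Q_{2^k}$ on $S^3$ as a subgroup of the unit quaternions $S^3 = Sp(1)$, which exhibits $S^3/Q_{2^k}$ as a spherical space form and yields a $4$-periodic free resolution; alternatively one can run the Lyndon--Hochschild--Serre spectral sequence of the extension $1 \to \mathbb{Z}/2^{k-1} \to Q_{2^k} \to \mathbb{Z}/2 \to 1$ and read off $H_2 = 0$. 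Either way, combining the two cases with the isomorphism $H^3(P;\mathbb{Z}) \cong H_2(P;\mathbb{Z})$ established above gives $H^3(P;\mathbb{Z}) = 0$, as required.
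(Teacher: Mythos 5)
Your universal-coefficient reduction $H^3(P;\mathbb{Z})\cong \mathrm{Ext}^1(H_2(P;\mathbb{Z}),\mathbb{Z})\cong H_2(P;\mathbb{Z})$ is correct (it is essentially the paper's Lemma~\ref{H3H2lemma}, proved there with $\mathbb{Q}/\mathbb{Z}$-coefficients instead), and the cyclic case via the $2$-periodic resolution is fine. But the generalized quaternion case --- which, as you say yourself, is the entire content of the lemma --- is not actually proved: both of your proposed routes are left as black boxes, and they are not equally salvageable. For the spherical space form route, merely having a $4$-periodic free resolution does not give vanishing: periodicity only yields $H^3(Q_{2^s};\mathbb{Z})\cong H^7\cong H^{11}\cong\cdots$, which is perfectly consistent with all of these being nonzero (compare $H^2(Q_8;\mathbb{Z})\cong(\mathbb{Z}/2)^2$, also off the period). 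The missing step is a comparison with the space form $N=S^3/Q_{2^s}$ itself: since $\pi_2(N)=0$, attaching cells of dimension $\ge 4$ exhibits $N$ as the $3$-skeleton of $BQ_{2^s}$, so $H^3(Q_{2^s};\mathbb{Z})$ injects into $H^3(N;\mathbb{Z})\cong\mathbb{Z}$ and, being finite, vanishes; equivalently, in your homological formulation, $H_2(Q_{2^s};\mathbb{Z})\cong H_2(N;\mathbb{Z})$ and $H_2(N;\mathbb{Z})\cong H^1(N;\mathbb{Z})=\mathrm{Hom}(H_1(N;\mathbb{Z}),\mathbb{Z})=0$ by Poincar\'e duality, as $H_1(N)$ is finite. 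This skeleton/duality step is precisely the paper's proof (which, using Milnor's list, runs this argument uniformly for the cyclic and quaternionic $2$-groups rather than splitting into cases), and it is exactly what your outline omits.

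The alternative route via the Lyndon--Hochschild--Serre spectral sequence of $1\to\mathbb{Z}/2^{s-1}\to Q_{2^s}\to\mathbb{Z}/2\to 1$ is worse than incomplete: the claim that one can ``read off $H_2=0$'' is false as stated. The quotient acts on the kernel by inversion, and the split extension with the same kernel, quotient, and action is the dihedral group of the same order; hence the two groups have identical $E^2$-pages. In total degree $2$ one finds $E^2_{2,0}=E^2_{0,2}=0$ and $E^2_{1,1}\cong H_1(\mathbb{Z}/2;\mathbb{Z}/2^{s-1}_{-})\cong\mathbb{Z}/2$, so the question of whether $H_2$ vanishes is decided by the differential $d^2\colon E^2_{3,0}\to E^2_{1,1}$. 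Since the dihedral group has Schur multiplier $\mathbb{Z}/2$ while $Q_{2^s}$ has trivial Schur multiplier, this differential is zero in one case and an isomorphism in the other: it depends on the extension class, not on the $E^2$-page, and identifying it is no easier than the original problem. So this branch of your plan would fail, and the other branch needs the space-form comparison spelled out to become a proof.
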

\begin{proof}
Based on the work of Hopf \cite{Hop45} and Seifert-Threlfall \cite{ST31}, Milnor \cite{Mil57} listed the collection of all finite groups acting freely and orthogonally on the $3$-sphere $S^3$. Among them, the $2$-groups are the cyclic groups $\mathbb{Z}/2^r$ with $r\geq 1$ and the generalized quaternionic groups $Q_{2^s}$ with $s\geq 2$. To prove the lemma, it suffices to show that $H^3(A;\mathbb{Z})=0$ for any finite group $A$ in Milnor's list.

Since $A$ acts freely and orthogonally on $S^3$, we have the spherical space form $N=S^3/A$. It is clear that $\pi_1(N)\cong A$ and $\pi_i(N)\cong \pi_i(S^3)$ for any $i\geq 2$. In particular, $\pi_2(N)=0$. By attaching cells of dimension greater than $3$, we can obtain an inclusion $j: N\hookrightarrow K(A, 1)=BA$, which indicates that $N$ is the $3$-skeleton of $BA$. Therefore, the induced map $j^\ast: H^3(A;\mathbb{Z})\larrow H^3(N;\mathbb{Z})\cong \mathbb{Z}$ is injective. However, $H^3(A;\mathbb{Z})$ is a finite group as $A$ is. It follows that $H^3(A;\mathbb{Z})=0$.
\end{proof}

The following proposition recovers the main theorem of \cite{DF16} for the {\em oriented case}. 
\begin{proposition}\label{DFprop}
Let $M$ be an $n$-dimensional almost flat oriented manifold. If the $2$-Sylow subgroup ${\rm Syl}_2(G)$ of the holonomy group $G$ of $M$ is cyclic or generalized quaternionic, then $M$ bounds an orientable manifold.
\end{proposition}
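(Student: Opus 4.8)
The plan is to obtain this proposition as an immediate consequence of the three results established above, assembled into a short chain of implications. The strategy is to first promote the hypothesis on the holonomy group to the existence of a spin$^c$ structure, and then to invoke the main bounding theorem; no new geometric or homological input is needed beyond what the preceding lemmas supply.

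First I would apply Lemma~\ref{DFlemma}. By assumption the $2$-Sylow subgroup ${\rm Syl}_2(G)$ of the holonomy group is cyclic or generalized quaternionic, so the lemma yields $H^3({\rm Syl}_2(G);\mathbb{Z})=0$. Next I would feed this vanishing into Lemma~\ref{H3lemma}, whose hypothesis is exactly $H^3({\rm Syl}_2(G);\mathbb{Z})=0$; its conclusion is that the almost flat manifold $M$ admits a spin$^c$ structure. Thus $M$, originally assumed only to be oriented, is in fact spin$^c$. Finally, having brought $M$ within the scope of Theorem~\ref{spinc-thm-intro}, I would invoke its first assertion---that every almost flat spin$^c$ manifold bounds an orientable manifold---to conclude that $M$ bounds an orientable manifold, with no restriction on the parity of $n$.

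Since the genuine content resides entirely in Lemmas~\ref{H3lemma} and~\ref{DFlemma} and in Theorem~\ref{spinc-thm-intro}, there is no real obstacle remaining at this stage; the proposition is a formal corollary of the machinery. The only point meriting attention is that Theorem~\ref{spinc-thm-intro} requires merely that \emph{some} spin$^c$ structure exist---that is, that $W_3(M)=0$, so that a class $c$ with $\rho_2(c)=\omega_2(TM)$ can be chosen---in order to deduce orientable bounding. Hence the bare existence provided by Lemma~\ref{H3lemma} already suffices, and one need not track a preferred choice of $c$ through the argument.
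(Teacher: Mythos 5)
Your proposal is correct and is exactly the paper's proof: Lemma~\ref{DFlemma} gives $H^3({\rm Syl}_2(G);\mathbb{Z})=0$, Lemma~\ref{H3lemma} then gives a spin$^c$ structure, and the first assertion of Theorem~\ref{spinc-thm-intro} concludes that $M$ bounds an orientable manifold. Your closing observation that only the existence of some spin$^c$ structure is needed is also accurate and implicit in the paper's argument.
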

\begin{proof}
The assumption that ${\rm Syl}_2(G)$ is cyclic or generalized quaternionic implies $H^3({\rm Syl}_2(G);\mathbb{Z})=0$ by Lemma \ref{DFlemma}, and then implies that $M$ is spin$^c$ by Lemma \ref{H3lemma}. Hence, $M$ bounds by Theorem \ref{spinc-thm-intro}.
\end{proof}

\begin{example}\label{Xio-ex}
Theorem \ref{spinc-thm-intro} also recovers \cite[Theorem 1.2]{Xio16}, where $L$ is assumed to be a simply connected $2$-step nilpotent Lie group with ${\rm dim}[L,L] = 1$ and $G$ acts trivially on the center of $L$. 
Indeed, as shown in the proof of \cite[Theorem 1.2]{Xio16} there, the classifying map $\varphi: M\larrow BSO(n)$ factors as 
\[
\varphi: M\stackrel{\eta}{\larrow} BG\stackrel{}{\larrow} BU(m)\stackrel{}{\larrow} BSO(n)
\]
for a suitable unitary group $U(m)$. Consequently, $M$ is spin$^c$ and Theorem \ref{spinc-thm-intro} applies to show that $M$ bounds. 
\end{example}

Lemma \ref{H3lemma} can be reformulated using the following result.

\begin{lemma}\label{H3H2lemma}
For a finite group $A$, $H^3(A; \mathbb{Z})=0$ if and only if $H_2(A; \mathbb{Z})=0$.
\end{lemma}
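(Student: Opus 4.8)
The plan is to deduce the equivalence from the Universal Coefficient Theorem in cohomology, which in degree $3$ gives a natural short exact sequence
\[
0 \larrow \mathrm{Ext}^1_{\mathbb{Z}}\big(H_2(A;\mathbb{Z}),\,\mathbb{Z}\big) \larrow H^3(A;\mathbb{Z}) \larrow \mathrm{Hom}_{\mathbb{Z}}\big(H_3(A;\mathbb{Z}),\,\mathbb{Z}\big) \larrow 0.
\]
First I would record the basic finiteness input: since $A$ is finite, every integral homology group $H_i(A;\mathbb{Z})$ with $i \geq 1$ is a finite abelian group. It is finitely generated (choose a model of $BA$ with finite skeleta, or use the bar resolution), and it is annihilated by $|A|$ by the standard transfer/averaging argument, so being finitely generated and torsion it is finite. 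In particular $H_2(A;\mathbb{Z})$ and $H_3(A;\mathbb{Z})$ are finite.

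Next I would analyze the two outer terms of the sequence. Because $H_3(A;\mathbb{Z})$ is finite, hence torsion, the group $\mathrm{Hom}_{\mathbb{Z}}(H_3(A;\mathbb{Z}),\mathbb{Z})$ vanishes, so the right-hand map forces $H^3(A;\mathbb{Z}) \cong \mathrm{Ext}^1_{\mathbb{Z}}(H_2(A;\mathbb{Z}),\mathbb{Z})$. For the left-hand term I would use the elementary fact that for any finite abelian group $F$ one has a (non-canonical) isomorphism $\mathrm{Ext}^1_{\mathbb{Z}}(F,\mathbb{Z}) \cong F$: decompose $F$ into cyclic summands and compute $\mathrm{Ext}^1_{\mathbb{Z}}(\mathbb{Z}/m\mathbb{Z},\mathbb{Z}) \cong \mathbb{Z}/m\mathbb{Z}$ from the free resolution $0 \to \mathbb{Z} \xrightarrow{m} \mathbb{Z} \to \mathbb{Z}/m\mathbb{Z} \to 0$.

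Combining these identifications yields $H^3(A;\mathbb{Z}) \cong \mathrm{Ext}^1_{\mathbb{Z}}(H_2(A;\mathbb{Z}),\mathbb{Z}) \cong H_2(A;\mathbb{Z})$, which is in fact a sharper statement than the lemma: the two groups are abstractly isomorphic, and in particular one vanishes exactly when the other does. There is no genuine obstacle here; the argument is essentially bookkeeping with the Universal Coefficient Theorem. The only point requiring mild care is the finiteness (and finite generation) of $H_2$ and $H_3$, which guarantees both that the Universal Coefficient sequence applies in the finitely generated setting and that the $\mathrm{Ext}^1$ and $\mathrm{Hom}$ computations take the stated form.
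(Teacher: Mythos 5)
Your proof is correct, but it takes a genuinely different route from the paper. You apply the integral universal coefficient theorem directly, kill the $\mathrm{Hom}(H_3(A;\mathbb{Z}),\mathbb{Z})$ term using finiteness of $H_3(A;\mathbb{Z})$, and identify $H^3(A;\mathbb{Z}) \cong \mathrm{Ext}^1_{\mathbb{Z}}(H_2(A;\mathbb{Z}),\mathbb{Z}) \cong H_2(A;\mathbb{Z})$. The paper instead uses the long exact sequence in cohomology associated to $0 \to \mathbb{Z} \to \mathbb{Q} \to \mathbb{Q}/\mathbb{Z} \to 0$, together with $H^{>0}(A;\mathbb{Q})=0$ for finite $A$, to obtain $H^3(A;\mathbb{Z}) \cong H^2(A;\mathbb{Q}/\mathbb{Z})$, and then the universal coefficient theorem with $\mathbb{Q}/\mathbb{Z}$ coefficients, whose Ext term vanishes because $\mathbb{Q}/\mathbb{Z}$ is injective, to conclude $H^3(A;\mathbb{Z}) \cong \mathrm{Hom}(H_2(A;\mathbb{Z}),\mathbb{Q}/\mathbb{Z})$, the Pontryagin dual. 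Both arguments in fact establish the sharper statement that $H^3(A;\mathbb{Z})$ is abstractly isomorphic to $H_2(A;\mathbb{Z})$. What each buys: your route requires finiteness of $H_3(A;\mathbb{Z})$ as an extra input (to kill the Hom term) but uses only the most standard Ext computations over $\mathbb{Z}$; the paper's route avoids $H_3$ entirely, needing only rational acyclicity of $BA$ and finite generation of $H_2$, and produces a natural identification with the Pontryagin dual rather than a non-canonical isomorphism, at the cost of invoking the coefficient sequence and injectivity of $\mathbb{Q}/\mathbb{Z}$. One small remark: the integral universal coefficient sequence holds for any chain complex of free abelian groups without finite generation hypotheses, so your care on that point, while harmless, is only genuinely needed where you use finiteness, namely for the vanishing of $\mathrm{Hom}(H_3(A;\mathbb{Z}),\mathbb{Z})$ and for the computation $\mathrm{Ext}^1_{\mathbb{Z}}(F,\mathbb{Z}) \cong F$.
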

\begin{proof}
Since $A$ is finite, it is clear that $H^{>0}(A;\mathbb{Q})=0$. Then the long exact sequence of the cohomology associated to the short exact sequence of coefficients
\[
0\larrow \mathbb{Z}\larrow \mathbb{Q}\larrow \mathbb{Q}/\mathbb{Z}\larrow 0
\]
implies that $H^3(A;\mathbb{Z})\cong H^2(A;\mathbb{Q}/\mathbb{Z})$. For the latter cohomology group, the universal coefficient theorem implies that 
\[
H^2(A;\mathbb{Q}/\mathbb{Z})\cong {\rm Hom}(H_2(A;\mathbb{Z}), \mathbb{Q}/\mathbb{Z}) \oplus {\rm Ext} (H_1(A;\mathbb{Z}), \mathbb{Q}/\mathbb{Z}).
\]
Since $\mathbb{Q}/\mathbb{Z}$ is injective, ${\rm Ext} (H_1(A;\mathbb{Z}), \mathbb{Q}/\mathbb{Z})=0$ and then
\[
H^3(A;\mathbb{Z})\cong H^2(A;\mathbb{Q}/\mathbb{Z})\cong {\rm Hom}(H_2(A;\mathbb{Z}), \mathbb{Q}/\mathbb{Z}).
\] 
In particular, $H^3(A;\mathbb{Z})=0$ if and only if ${\rm Hom}(H_2(A;\mathbb{Z}), \mathbb{Q}/\mathbb{Z})=0$. However, it is clear that ${\rm Hom}(\mathbb{Z}, \mathbb{Q}/\mathbb{Z})$ and ${\rm Hom}(\mathbb{Z}/p^r, \mathbb{Q}/\mathbb{Z})$ with any prime $p$ are nontrivial. Therefore, ${\rm Hom}(H_2(A;\mathbb{Z}), \mathbb{Q}/\mathbb{Z})=0$ if and only $H_2(A;\mathbb{Z})=0$, and then the lemma follows. 
\end{proof}

Combining Lemmas \ref{H3lemma} and \ref{H3H2lemma}, we obtain the following lemma immediately.

\begin{lemma}\label{H2lemma}
If $H_2({\rm Syl}_2(G); \mathbb{Z})=0$, then the almost flat manifold $M$ is spin$^c$. ~$\qqed$
\end{lemma}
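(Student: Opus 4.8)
The plan is to deduce this statement directly from the two immediately preceding lemmas, so that no genuinely new argument is required. The key observation is that $\mathrm{Syl}_2(G)$ is a \emph{finite} group, being a Sylow subgroup of the finite holonomy group $G$ (finite by Gromov's theorem); this finiteness is exactly what is needed to apply Lemma~\ref{H3H2lemma} with $A = \mathrm{Syl}_2(G)$.

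First I would use Lemma~\ref{H3H2lemma} to translate the homological hypothesis into a cohomological one. That lemma gives the equivalence $H^3(\mathrm{Syl}_2(G); \mathbb{Z}) = 0 \iff H_2(\mathrm{Syl}_2(G); \mathbb{Z}) = 0$, so the assumed vanishing $H_2(\mathrm{Syl}_2(G); \mathbb{Z}) = 0$ is literally the same as $H^3(\mathrm{Syl}_2(G); \mathbb{Z}) = 0$. I would then feed this cohomological vanishing into Lemma~\ref{H3lemma}, whose hypothesis is precisely $H^3(\mathrm{Syl}_2(G); \mathbb{Z}) = 0$, to conclude that the almost flat manifold $M$ is spin$^c$.

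There is essentially no obstacle here: the substantive work has already been carried out in Lemma~\ref{H3lemma} (the obstruction-theoretic lifting along $BSpin^c(n) \to BSO(n)$ together with the odd-index transfer argument) and in Lemma~\ref{H3H2lemma} (the universal coefficient and Bockstein computation), and the present statement is simply their composition. The only point meriting a moment's care is the finiteness of $\mathrm{Syl}_2(G)$, which underlies the input $H^{>0}(A;\mathbb{Q}) = 0$ used in Lemma~\ref{H3H2lemma}; since $G$ is finite, this is automatic, and the lemma follows at once.
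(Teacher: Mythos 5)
Your proposal is correct and matches the paper's own argument exactly: the paper states that this lemma follows immediately by combining Lemma~\ref{H3lemma} and Lemma~\ref{H3H2lemma}, which is precisely your composition (with the finiteness of ${\rm Syl}_2(G)$ justifying the application of Lemma~\ref{H3H2lemma}).
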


\begin{remark}\label{HG-rmk}
The subgroup inclusion ${\rm Syl}_2(G)\stackrel{i}{\hookrightarrow} G$ implies an odd-degree finite covering map $B{\rm Syl}_2(G)\stackrel{Bi}{\larrow} BG$. Then it is a homotopy equivalence after localization at the prime $2$. Therefore, the condition $H_2({\rm Syl}_2(G); \mathbb{Z})=0$ in Lemma \ref{H2lemma} is equivalent to that $H_2(G;\mathbb{Z})$ is of odd order. 
\end{remark}

In group theory, the second homology group $H_2(A;\mathbb{Z})$ of a finite group $A$ is known as the {\it Schur multiplier} of $A$, a classical notion introduced by Issai Schur in 1904. 

\begin{example}\label{Hopf-ex}
Suppose that $A$ has a presentation $A\cong F/R$ with $F$ a free group. Then the classical Hopf theorem \cite[Theorem II.5.3]{Bro82} implies that $H_2(A;\mathbb{Z})\cong R\cap [F, F]/[F, R]$. In particular, the Schur multiplier of $A$ is trivial if and only if $R\cap [F, F]= [F, R]$. 
\end{example}

Thanks to the classification of finite simple groups--a landmark achievement of 20th-century mathematics--the Schur multipliers of all finite simple groups have been completely determined; see, for instance, \cite{Wil09, Wik}. In what follows, we list all finite simple groups whose Schur multipliers have odd torsion.

\begin{example}\label{simple-ex}
Let $A$ be a finite simple group. The Schur multiplier $H_2(A;\mathbb{Z})$ has odd order precisely in the following cases:
\begin{itemize}
\item[(1).] Cyclic groups: $\mathbb{Z}/p$ with $p$ a prime number;
\item[(2).] Groups of Lie type (with $q=p^k$ for a prime number $p$ and positive $k$):
  \begin{itemize}
  \item Chevalley groups: 
     \begin{itemize} 
     \item[(Ch1).] linear groups $A_n(q)$ with $(n+1,q-1)$ odd and $(n, q)\neq (1, 4)$, $(2, 2)$, $(2, 4)$, $(3, 2)$; 
     \item[(Ch2).] orthogonal groups $B_n(q)$ with $n>1$, $q$ even and $(n, q)\neq (2, 2)$, $(3, 2)$; 
     \item[(Ch3).] symplectic groups $C_n(q)$ with $n>2$, $q$ even and $(n, q)\neq (3, 2)$; 
     \item[(Ch4).] orthogonal groups $D_n(q)$ with $n>3$, $(4,q^n-1)=1$ and $(n, q)\neq (4, 2)$; 
     \item[(Ch5).] $E_6(q)$; 
     \item[(Ch6).] $E_7(q)$ with $q$ even; 
     \item[(Ch7).] $E_8(q)$; 
     \item[(Ch8).] $F_4(q)$ with $q\neq 2$;
     \item[(Ch9).] $G_2(q)$ with $q\neq 4$;
     \end{itemize}
 \item Steinberg groups: 
   \begin{itemize} 
     \item[(S1).] unitary groups ${}^{2}A_n(q^2)$ with $n>1$, $(n+1,q+1)$ odd and $(n, q)\neq (3, 2)$, $(5,2)$; 
     \item[(S2).] orthogonal groups ${}^{2}D_n(q^2)$ with $n>3$, $(4,q^n+1)=1$; 
     \item[(S3).] ${}^{2}E_6(q^2)$ with $q\neq 2$;
     \item[(S4).] ${}^{3}D_4(q^3)$;
     \end{itemize}
 \item Suzuki groups: ${}^{2}B_2(2^{2n+1})$ with $n\geq 2$;
  \item Ree groups and Tits group: 
    \begin{itemize} 
     \item[(R1)] Ree groups ${}^{2}F_4(2^{2n+1})$ with $n\geq 1$; 
     \item[(R2)] Tits group ${}^{2}F_4(2)'$;
     \item[(R3)] Ree groups ${}^{2}G_2(3^{2n+1})$ with $n\geq 1$;
     \item[(R4)] the derived group ${}^{2}G_2(3)'$;
     \end{itemize}
     \end{itemize}
\item[(3).] Sporadic groups:
  \begin{itemize}
  \item Mathieu groups:   
     \begin{itemize} 
     \item[(M1)] $M_{11}$;
     \item[(M2)] $M_{23}$;
     \item[(M3)] $M_{24}$;
     \end{itemize}
  \item Janko groups:   
     \begin{itemize} 
     \item[(J1)] $J_{1}$;
     \item[(J2)] $J_{3}$;
     \item[(J3)] $J_{4}$;
     \end{itemize}
  \item Conway groups:   
     \begin{itemize} 
     \item[(C1)] $Co_{2}$;
     \item[(C2)] $Co_{3}$;
     \end{itemize}
  \item Fischer groups:
    \begin{itemize} 
     \item[(F1)] $Fi_{23}$;
     \item[(F2)] $Fi_{24}'$;
     \end{itemize}
  \item McLaughlin group: $McL$;
  \item Held group: $He$;
  \item O'Nan group: $O'N$;
  \item Harada-Norton group: $HN$;
  \item Lyons group: $Ly$;
  \item Thompson group: $Th$;
  \item Fischer-Griess Monster group, $M$.
   \end{itemize}
\end{itemize}
\end{example}

We are in a position to prove Theorem \ref{G-thm-intro}.
\begin{proof}[Proof of Theorem \ref{G-thm-intro}]
For the first statement, the Schur multiplier of $G$ is of odd order by assumption. By Remark \ref{HG-rmk}, this is equivalent to $H_2({\rm Syl}_2(G); \mathbb{Z})=0$. It follows that $M$ is spin$^c$ by Lemma \ref{H2lemma}, and hence $M$ bounds by Theorem \ref{spinc-thm-intro}.

For the second statement, the case when ${\rm Syl}_2(G)$ is cyclic or generalized quaternionic follows from Proposition \ref{DFprop}. For the remaining case when ${\rm Syl}_2(G)={\rm Syl}_2(A)$ for a finite simple group $A$ in Example \ref{simple-ex}, it is known that $H_2(A;\mathbb{Z})$ is of odd order. Then Remark \ref{HG-rmk} implies that $H_2({\rm Syl}_2(G); \mathbb{Z})=H_2({\rm Syl}_2(A); \mathbb{Z})=0$, or equivalently, the Schur multiplier of $G$ is of odd order. Consequently, the second statement in this case follows from the first statement. 
\end{proof}

%----------------------------------------------------------------------------------------------------------------------------------------------------------------------------------------------------------%
\section{The $4$-dimensional case}
\label{sec: 4}

Let $M$ be a $4$-dimensional closed orientable smooth manifold. A classical result of Wu \cite{Wu50} and Hirzebruch-Hopf \cite{HH58}, reproved by Teichner-Vogt \cite{TV} in modern language, shows that $M$ is spin$^c$. 

The following lemma is due to Szczepa\'{n}ski \cite[Lemma 1 with its proof and (1)]{Szc18}. 
Let $b_i(M)$ be the $i$-th Betti number of $M$.
\begin{lemma}\label{szc-lemma}
Let $M$ be an almost flat orientable $4$-manifold. If $M$ is non-spin, then $b_1(M)=1$ and $b_2(M)=0$. ~$\qqed$
\end{lemma}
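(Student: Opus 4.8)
The plan is to reduce the statement to a single numerical assertion and then analyze the holonomy representation. First I would record two vanishing results valid for any almost flat oriented $4$-manifold. Its Euler characteristic vanishes: by Gromov, $M$ is finitely covered by a parallelizable nilmanifold, $\chi$ is multiplicative under finite covers, and nilmanifolds have zero Euler characteristic (equivalently, the flat tangent bundle below has rational Euler class $0$). Its signature vanishes by Lemma~\ref{chern-lemma} and the Hirzebruch signature theorem, since $\sigma(M)=\tfrac13\langle p_1(M),[M]\rangle=0$. Poincaré duality then gives $\chi(M)=2-2b_1(M)+b_2(M)=0$, i.e. $b_2(M)=2b_1(M)-2$, and in particular $b_1(M)\ge 1$. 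Hence the whole statement reduces to proving that a non-spin $M$ satisfies $b_1(M)\le 1$; equivalently, in contrapositive form, that $b_1(M)\ge 2$ forces $M$ to be spin.

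Next I would invoke the infranilmanifold structure of Section~\ref{sec: holo}. Writing $\mathfrak{l}$ for the Lie algebra of $L$, the canonical flat connection on an infranilmanifold exhibits $TM$ as the flat bundle $L\times_\Gamma\mathfrak{l}$ associated with the holonomy representation $\rho\colon G\to SO(\mathfrak{l})=SO(4)$ through $\eta\colon M\to BG$, so that $w_2(M)=\eta^*w_2(\rho)$. The first Betti number is elementary here: $H^1(M;\mathbb{Q})=\mathrm{Hom}(\Gamma,\mathbb{Q})=(V_1^*)^G$, where $V_1=\mathfrak{l}/[\mathfrak{l},\mathfrak{l}]$ is the abelianization as a $G$-representation, so $b_1(M)=\dim(V_1)^G$. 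Since $G$ is finite, Maschke's theorem splits $\mathfrak{l}\cong\mathbb{R}^{t}\oplus V'$ over $\mathbb{R}$, with $\mathbb{R}^t=\mathfrak{l}^G$ the trivial summand and $V'$ carrying no trivial constituent; because $[\mathfrak{l},\mathfrak{l}]$ is a subrepresentation, $t=b_1(M)+\dim([\mathfrak{l},\mathfrak{l}])^G\ge b_1(M)$. Thus $b_1(M)\ge 2$ gives $\dim V'\le 2$, and $w_2(M)=\eta^*w_2(V')$. If $\dim V'\le 1$ then $w_2(V')=0$ and $M$ is spin at once, so the only surviving case is $\dim V'=2$; here $V'$ has no trivial summand and $\det V'$ is trivial (orientability), which forces either $V'\cong\psi^{\oplus2}$ for a nontrivial order-two character $\psi$ (two distinct real characters cannot multiply to the trivial one), or $V'$ an irreducible rotation representation of a cyclic quotient of $G$. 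A quick check shows this genuinely constrains the three possible $\mathfrak{l}$: the filiform case collapses to the nilmanifold itself, and the rotation type occurs only in the abelian case.

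In this borderline case $w_2(V')$ is a nonzero class on $BG$ — the square $a^2$ with $a=w_1(\psi)$, respectively the mod-$2$ reduction of the (torsion) Euler class — and the crux is to show that its pullback under $\eta^*$ nonetheless vanishes in $H^2(M;\mathbb{Z}/2)$. Here I would exploit the defining feature of the torsion-free group $\Gamma$: a lift $\gamma_0\in\Gamma$ of a holonomy element acts freely, so by the classical Bieberbach fixed-point criterion its translational part must have a nonzero component in the $G$-fixed subspace of $\mathfrak{l}$ — a subspace that is at least two-dimensional exactly because $b_1(M)\ge 2$. Concretely, some power $\gamma_0^{k}$ equals a nontrivial lattice translation in a fixed direction, and this is precisely what permits lifting the relevant $\mathbb{Z}/2$ (or $\mathbb{Z}/m$) class on $\Gamma$ to an integral (resp. $\mathbb{Z}/2m$) class, i.e. $\eta^*a$ becomes the reduction of an integral class, or the flat line bundle acquires a flat square root. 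Either way $\eta^*w_2(V')$ is the reduction of a $2$-divisible integral class, hence $0$, and $M$ is spin. I expect this final step — controlling $\ker\bigl(\eta^*\colon H^2(BG;\mathbb{Z}/2)\to H^2(M;\mathbb{Z}/2)\bigr)$ and extracting the integral lift from the torsion-freeness of $\Gamma$ — to be the main obstacle, since it is exactly the point at which the arithmetic of the lattice, rather than the representation $\rho$ alone, must enter; all three algebras $\mathfrak{l}$ (abelian, $\mathfrak{h}_3\oplus\mathbb{R}$, and the filiform one) would then be dispatched uniformly by this mechanism.
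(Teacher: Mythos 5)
The paper does not actually prove this lemma: it quotes it from Szczepa\'{n}ski \cite{Szc18}, so your attempt can only be judged on its own terms. Your first paragraph is correct and clean: $\chi(M)=0$ (finite cover by a nilmanifold) plus Poincar\'e duality gives $b_2=2b_1-2$, so everything reduces to showing that $b_1\ge 2$ forces $M$ to be spin. The representation-theoretic reduction is also essentially sound: $TM$ is the flat bundle associated to the holonomy representation, $b_1=\dim (V_1)^G$, and with $b_1\ge 2$ the nontrivial part $V'$ of $\mathfrak{l}$ has dimension at most $2$, leaving only $V'\cong\psi^{\oplus 2}$ or an irreducible rotation type; your side claims (filiform plus $b_1\ge2$ forces $G=1$; rotation type with $b_1\ge 2$ occurs only for abelian $\mathfrak{l}$) do check out, although note that $V'\cong\psi^{\oplus2}$ is \emph{not} excluded for $\mathfrak{l}=\mathfrak{h}_3\oplus\mathbb{R}$, so the non-abelian case must still be handled.

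The genuine gap is exactly where you predicted it: the last step is an assertion, not an argument, and the mechanism as you state it would not suffice. Two concrete problems. (i) The Bieberbach no-fixed-point criterion concerns the fixed space of the \emph{individual} element $\rho(g)$, not the $G$-fixed subspace; these coincide here only because in both surviving cases every nontrivial element of $G$ acts on the nontrivial part of $V_1$ without nonzero fixed vectors, and for non-abelian $L$ one additionally needs the lemma that a fixed point of the induced affine map on $L/[L,L]$ lifts to a fixed point on $L$ (standard in the almost-Bieberbach literature, but a real ingredient you neither state nor prove). (ii) More seriously, ``some power $\gamma_0^k$ is a nontrivial lattice translation in a fixed direction'' does not produce the integral lift: whether the character $u\colon\Gamma\to\mathbb{Z}/2$ lifts to $\mathbb{Z}/4$ (equivalently $\mathrm{Sq}^1u=u^2=0$) is obstructed by the $2$-torsion of $\Gamma^{\mathrm{ab}}$, and knowing the order of a single element in $\Gamma^{\mathrm{ab}}$ controls nothing. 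What actually closes the argument is a strengthening of your idea applied to \emph{all} of $\Gamma$ at once: the $V_1^G$-component of the abelianized translational part defines a homomorphism $\phi\colon\Gamma\to V_1^G$ (because projection onto $G$-invariants is $G$-equivariant); the no-fixed-point criterion, in the form of (i), says precisely that $\ker\phi\subseteq\Gamma\cap L$, so the holonomy character $\Gamma\to G\to U(1)$ factors through the finitely generated torsion-free, hence free, abelian group $\phi(\Gamma)$ and therefore lifts to $\mathbb{Z}$ (resp.\ $\mathbb{R}$); this kills $\mathrm{Sq}^1u$ in the $\psi^{\oplus2}$ case and $c_1$ of the flat line bundle in the rotation case, giving $w_2(M)=0$. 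Without this (or an equivalent) mechanism your proof stops at the decisive point, as you yourself acknowledge.
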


\begin{proof}[Proof of Theorem \ref{4-thm-intro}]
By \cite{Wu50, HH58, TV}, the almost flat orientable $4$-manifold $M$ is spin$^c$. 

(1). Suppose that $M$ is non-spin. Then $b_2(M)=0$ by Lemma \ref{szc-lemma}. It follows that for any spin$^c$ structure $(M, c)$ on $M$, $c\in H^2(M;\mathbb{Z})$ is a torsion class. Then $\langle c^2, [M]\rangle=0$ and by Proposition \ref{cn=0-prop}, $M$ bounds a spin$^c$ manifold.

(2). Suppose that $M$ is spin. It is well known that the 4 dimensional spin bordism group $\Omega_4^{spin}=\mathbb{Z}$ and the generator of this group can be taken to be the $K3$ surface $X$, whose $\widehat{A}$-genus $\widehat{A}(X)=1$. Since $\widehat{A}(M)=-\frac{1}{24}\int_M p_1(M)=0$, $M$ must be 0 in $\Omega_4^{spin}$. 
\end{proof}

%----------------------------------------------------------------------------------------------------------------------------------------------------------------------------------------------------------%

%%% The bibliography %%%
\bibliographystyle{amsalpha}

\end{document}